\newif \ifSINUM \SINUMfalse

\ifSINUM

\documentclass[review]{siamart251216} 

\else

\documentclass[a4paper,11pt]{article}
\usepackage[english]{babel}
\usepackage[utf8]{inputenc}
\usepackage[T1]{fontenc}
\usepackage{amsthm}

\fi

\usepackage{amsmath,amsfonts,amssymb}
\usepackage{mathtools,mathbbol}
\DeclareSymbolFontAlphabet{\mathbb}{AMSb}
\DeclareSymbolFontAlphabet{\mathbbl}{bbold}
\DeclareMathAlphabet{\mathbbmsl}{U}{bbm}{m}{sl}
\usepackage{upgreek}
\usepackage{nicefrac}
\usepackage{cancel}
\usepackage{comment}
\usepackage{enumitem}
\usepackage{physics}
\usepackage{newtxtext,newtxmath}
\usepackage{graphicx}

\ifSINUM

\hypersetup{
  pdftitle = {Equivalence of mixed and nonconforming methods on general polytopal partitions. Part I: Multiscale and projection methods},
  pdfauthor = {S. Lemaire}
}

\else

\usepackage[dvipsnames, svgnames]{xcolor}
\usepackage[hmargin={28mm,28mm},vmargin={30mm,35mm}]{geometry}
\usepackage{authblk}
\usepackage{hyperref}
\hypersetup{
  pdftitle = {Equivalence of mixed and nonconforming methods on general polytopal partitions. Part I: Multiscale and projection methods},
  pdfauthor = {S. Lemaire},
  colorlinks = true,                        
  linkcolor = OliveGreen,                   
  citecolor = violet,                       
  filecolor = blue,                         
  urlcolor = magenta                        
}

\fi

\usepackage{pgfplots}
\pgfplotsset{every axis/.append style={
                     label style={font=\Large},
                     tick label style={font=\Large},
                     legend style={font=\Large}
                     }
}
\usetikzlibrary{matrix}
\usetikzlibrary{arrows.meta}

\ifSINUM

\headers{Equivalence of mixed and nonconforming methods}{S. Lemaire}
\title{Equivalence of mixed and nonconforming methods on general polytopal partitions\\Part I: Multiscale and projection methods\thanks{Submitted to the editors 02/16/2026.}}
\author{Simon Lemaire\thanks{Univ.~Lille, CNRS, Inria, UMR 8524--Laboratoire Paul Painlev\'e, F-59000 Lille, France (\email{simon.lemaire@inria.fr}).}}

\else

\newcommand{\email}[1]{\href{mailto:#1}{#1}}

\title{Equivalence of mixed and nonconforming methods on general polytopal partitions\\Part I: Multiscale and projection methods}
\author[1]{Simon Lemaire\footnote{\email{simon.lemaire@inria.fr}}} 
\affil[1]{Univ.~Lille, CNRS, Inria, UMR 8524--Laboratoire Paul Painlev\'e, F-59000 Lille, France}

\fi

\ifSINUM

\else

\newtheorem{lemma}{Lemma}
\newtheorem{proposition}{Proposition}

\fi

\newtheorem{remark}{Remark}
\newtheorem{example}{Example}
\newtheorem{problem}{Problem}
\newcommand{\defi}{\coloneqq}

\newcommand{\car}{{\rm card}}
\renewcommand{\dim}{{\rm dim}}
\newcommand{\spa}{{\rm span}}

\renewcommand{\vec}[1]{\boldsymbol{#1}}
\newcommand{\til}[1]{\widetilde{#1}}
\renewcommand{\bar}[1]{\overline{#1}}

\newcommand{\N}{\mathbb{N}}
\newcommand{\R}{\mathbb{R}}

\DeclareMathOperator{\vRot}{{\bf rot}}
\DeclareMathOperator{\sRot}{rot}

\DeclareMathOperator{\Grad}{\bf grad}
\DeclareMathOperator{\Curl}{\bf curl}
\DeclareMathOperator{\Div}{div}

\newcommand{\tb}{\mathbbl{b}}

\newcommand{\bmin}{\mathrm{b}_\flat}
\newcommand{\bmax}{\mathrm{b}_\sharp}

\newcommand{\LL}[1][\Omega]{L^2(#1)}
\newcommand{\Hone}[1][\Omega]{H^1(#1)}
\newcommand{\Hzone}[1][\Omega]{H^1_0(#1)}
\newcommand{\tilHzone}[1][k]{\til{H}^{1,#1}_0(\Omega)}

\newcommand{\Hphalf}[1][\pKK]{H^{\frac{1}{2}}(#1)}
\newcommand{\Hzphalf}[1][\pKK]{H^{\frac{1}{2}}_0(#1)}
\newcommand{\Hmhalf}[1][\pKK]{H^{-\frac{1}{2}}(#1)}

\newcommand{\vLL}[1][\Omega]{\vec{L}^2(#1)}
\newcommand{\vHone}[1][\Omega]{\vec{H}^1(#1)}

\newcommand{\Hcurlzw}[1][\Omega]{\vec{H}(\Curl^{\vec{0}}_{\tb^{-1}} ; #1)}

\newcommand{\Hdiv}[1][\Omega]{\vec{H}(\Div ; #1)}
\newcommand{\Hzdiv}[1][\Omega]{\vec{H}_0(\Div ; #1)}

\newcommand{\DD}{\mathcal{D}}
\newcommand{\KK}{\mathcal{K}}
\newcommand{\pKK}{\partial\KK}
\newcommand{\FF}{\mathcal{F}}
\newcommand{\FFi}{\FF^{\rm i}}
\newcommand{\FFb}{\FF^{\rm b}}

\newcommand{\pK}{\partial K}
\newcommand{\vN}{\vec{n}}

\newcommand{\Pol}{\mathcal{P}}

\newcommand{\vPol}{\vec{\Pol}}

\newcommand{\Gol}{\vec{\mathcal{G}}}
\newcommand{\kGol}{\Gol_{\rm c}}
\newcommand{\Col}{\vec{\mathcal{C}}}
\newcommand{\kCol}{\Col_{\rm c}}

\newcommand{\CR}{\til{\mathcal{CR}}}

\newcommand{\RT}{\vec{\mathcal{RT}}}
\newcommand{\BDM}{\vec{\mathcal{BDM}}}
\newcommand{\Ne}{\vec{\mathcal{N}}}

\newcommand{\projK}[1][\ell-1]{\pi^{#1}_K}
\newcommand{\projKK}[1][\ell-1]{\pi^{#1}_{\KK}}
\newcommand{\projF}[1][k-1]{\pi^{#1}_F}
\newcommand{\projFFK}[1][k-1]{\pi^{#1}_{\FF_K}}

\newcommand{\vprojK}[1][\ell-2]{\vec{\pi}^{#1}_K}
\newcommand{\vprojKK}[1][\ell-2]{\vec{\pi}^{#1}_{\KK}}
\newcommand{\vprojGK}[1][\ell-2]{\vec{\pi}^{#1}_{\Gol,K}}

\newcommand{\projFluxK}[2][\ell]{\vec{\pi}_{#2,K}^{#1}}
\newcommand{\projFluxKK}[2][\ell]{\vec{\pi}_{#2,\KK}^{#1}}

\newcommand{\wprojFluxKK}[2][\ell]{\vec{\pi}_{#2,\KK}^{#1,\tb^{-1}}}

\newcommand{\dPot}[1][\ell,k]{U^{#1}}
\newcommand{\dPotb}[1][\ell,k]{X^{#1}}
\newcommand{\gPot}[1][\ell,k]{\til{U}^{#1}_0(\Omega)}
\newcommand{\gPotb}[1][\ell,k]{\til{X}^{#1}_0(\Omega)}

\newcommand{\dHzphalf}[1][k]{\til{H}^{\frac{1}{2},#1}_0(\pKK)}
\newcommand{\dHmhalf}[1][k]{H^{-\frac{1}{2},#1}(\pKK)}

\newcommand{\dFlu}[1][\ell,k]{\vec{\Sigma}^{#1}}
\newcommand{\gFlu}[1][\ell,k]{\dFlu[#1](\Omega)}

\newcommand{\wprojFluK}[1][\ell]{\vec{\pi}_{\vec{\Sigma},K}^{#1,\tb^{-1}}}
\newcommand{\wprojFluKK}[1][\ell]{\vec{\pi}_{\vec{\Sigma},\KK}^{#1,\tb^{-1}}}

\begin{document}

\maketitle

\ifSINUM

\begin{abstract}
  We study equivalence, in the context of a variable diffusion problem, between (conforming) mixed methods and (primal) nonconforming methods defined on potentially general polytopal partitions. In this first paper of a series of two, we focus on multiscale and projection methods. For multiscale methods, we establish the first-level equivalence between four different (oversampling-free) approaches, thereby broadening the results of~\cite{CFELV:22}. For projection methods, in turn, we provide a simple criterion (to be checked in practice) for primal/mixed well-posedness and equivalence to hold true. In the process, we also shed a new light on some self-stabilized hybrid methods. Part II of this work will address (general) polytopal element methods.
\end{abstract}
\begin{keywords} 
  polytopal meshes; hybridization; primal/mixed equivalence; multiscale methods; projection methods
\end{keywords}
\begin{MSCcodes}
  65N30, 65N08, 76R50
\end{MSCcodes}

\else

\begin{abstract}
  We study equivalence, in the context of a variable diffusion problem, between (conforming) mixed methods and (primal) nonconforming methods defined on potentially general polytopal partitions. In this first paper of a series of two, we focus on multiscale and projection methods. For multiscale methods, we establish the first-level equivalence between four different (oversampling-free) approaches, thereby broadening the results of~\cite{CFELV:22}. For projection methods, in turn, we provide a simple criterion (to be checked in practice) for primal/mixed well-posedness and equivalence to hold true. In the process, we also shed a new light on some self-stabilized hybrid methods. Part II of this work will address (general) polytopal element methods.
  \medskip\\
  \textbf{Keywords:} polytopal meshes; hybridization; primal/mixed equivalence; multiscale methods; projection methods
  \smallskip\\
  \textbf{AMS Subject Classification 2020:} 65N30, 65N08, 76R50
\end{abstract}

\fi

\section{Introduction}

Let $\Omega$ be a domain in $\R^d$, $d\in\{2,3\}$, i.e., a bounded and connected Lipschitz open set of $\R^d$. We consider the following variable diffusion problem.
\begin{problem}[Strong] \label{pb:strong}
  Find the flux $\vec{\sigma}:\Omega\to\R^d$ and the potential $u:\Omega\to\R$ s.t.
  \setcounter{equation}{-1}
  \begin{subequations} \label{eq:strong}    
    \begin{alignat}{2}
      \tb^{-1}\vec{\sigma}-\Grad u &= \vec{0} &\qquad&\text{{\rm in} $\Omega$}, \label{eq:strong.a}
      \\
      -\Div\vec{\sigma} &= f &\qquad&\text{{\rm in} $\Omega$}, \label{eq:strong.b}
      \\
      u  &=  0 &\qquad&\text{{\rm on} $\partial \Omega$}, \label{eq:strong.c}
    \end{alignat}
  \end{subequations}
  where $f:\Omega\to\R$ is a given load function and $\tb:\Omega\to\R^{d\times d}_{\rm sym}$ a mobility tensor field.
\end{problem}
\noindent
Throughout this work, we assume that $f\in\LL$\footnote{The case $f\in H^{-1}(\Omega)$ could be treated similarly, provided a decomposition $f=g+\Div\vec{G}$ with $g\in\LL$ and $\vec{G}\in\vLL$ is available. Setting $\hat{\vec{\sigma}}\defi\vec{\sigma}+\vec{G}$, the couple $(\hat{\vec{\sigma}},u)$ is then solution to Problem~\ref{pb:strong} with right-hand side $(\vec{0}\leftarrow\tb^{-1}\vec{G},f\leftarrow g,0)$.}, and that there are $\bmax\geq\bmin >0$ such that, for almost every $\vec{x}\in\Omega$, there holds $\bmin|\vec{\xi}|^2\leq\tb(\vec{x})\vec{\xi}{\cdot}\vec{\xi}\leq\bmax|\vec{\xi}|^2$ for all $\vec{\xi}\in\R^d$.
For $X\subset\bar{\Omega}$, we let $({\cdot},{\cdot})_X$ irrespectively denote the standard $\LL[X]$ or $\vLL[X]\defi\LL[X]^d$ inner product, and $\|{\cdot}\|_X\defi\sqrt{({\cdot},{\cdot})_X}$ the corresponding norm.
Classically, the primal weak form of Problem~\ref{pb:strong} seeks a potential $u\in\Hzone$ satisfying
\begin{problem}[Primal] \label{pb:c.primal}
  Find $u\in\Hzone$ s.t.
  \begin{equation} \label{eq:c.primal}
    (\tb\Grad u,\Grad v)_{\Omega}=(f,v)_{\Omega}\qquad\forall v\in\Hzone.
  \end{equation}
\end{problem}
\noindent
In turn, the mixed weak form of Problem~\ref{pb:strong} seeks a couple flux/potential $(\vec{\sigma},u)\in\Hdiv\times\LL$ satisfying
\begin{problem}[Mixed] \label{pb:c.mixed}
  Find $(\vec{\sigma},u)\in\Hdiv\times\LL$ s.t.
  \begin{subequations} \label{eq:c.mixed}
    \begin{alignat}{2}
      (\tb^{-1}\vec{\sigma},\vec{\tau})_{\Omega}+(u,\Div\vec{\tau})_{\Omega} &= 0 &\qquad&\forall\vec{\tau}\in\Hdiv, \label{eq:c.mixed.a}
      \\
      -(\Div\vec{\sigma},v)_{\Omega} &= (f,v)_{\Omega} &\qquad&\forall v\in\LL. \label{eq:c.mixed.b}
    \end{alignat}
  \end{subequations}
\end{problem}

Primal/mixed equivalence crucially hinges on the concept of hybridization.
At the discrete level, the first hybridization of a mixed (finite element) method appeared in 1965 in the work of Fraeijs de Veubeke~\cite{FrdeV:65}, in the linear elasticity context.
At that time, though, hybridization was merely used as an algebraic trick, often combined with static condensation, to efficiently solve the saddle-point systems arising from mixed formulations.
Only 20 years later was it realized by Arnold and Brezzi~\cite{ArBre:85} that hybridization could be more than an algebraic ruse, and provide an equivalent rewriting of mixed methods as (primal) nonconforming schemes, with built-in improved consistency of the potential reconstruction.
Not only the saddle-point system can be replaced by an equivalent (condensed) SPD system~\cite{Marin:85}, but the solution potential to this new system is both more accurate (asymptotically) and more conforming (in a weak sense) than the original (fully discontinuous) potential.
The seminal work of Arnold and Brezzi was later generalized by Chen and Arbogast~\cite{ChenZ:93,ArbCh:95,ChenZ:96}, who introduced the concept of projection finite element method, along with an abstract well-posedness theory.
Further clarifications about the algebraic structure of hybridized mixed methods were also brought in~\cite{CoGop:04}.
Whereas the hybridization of mixed methods has been extensively studied in the finite element literature, much less attention was dedicated to the hybridization of primal (nonconforming) methods.
Of course, from an algebraic point of view, such a rewriting is not particularly interesting; nevertheless, one may want/need, in practice, to reconstruct a conforming flux out of the solution to the primal problem.
Hybridizing the primal form of the problem gives rise to a flux-based formulation, in the sense that the globally coupled face unknowns of the resulting linear system are of flux type (cf.~\cite{RaTho:77a}).
However, starting from a given nonconforming finite element method (think, e.g., of the standard Crouzeix--Raviart method~\cite{CroRa:73}), it is usually not straightforward to recover an equivalent mixed method, because of the (probable) insufficient number of bubbles spanning the potential space. 

In this work, we aim to revisit hybridization techniques in the context of general polytopal grids. To this aim, we adopt a higher-level perspective based on virtual spaces. We present a finite-dimensional virtual construction for which primal/mixed equivalence is shown to hold true. Starting from the standard primal and mixed forms of the problem, which account for two distinct variational principles (based, respectively, on the potential and complementary energies), we hybridize them to obtain two additional formulations, the first one being flux-based (globally coupled face unknowns of flux type) and the second one being potential-based (globally coupled face unknowns of potential type). Primal/mixed equivalence is then established following the diagram depicted on Figure~\ref{fig:c.diag} (which is relative to the continuous case). Our virtual construction is valid for locally varying mobility tensors, thereby enabling us to build bridges between some (oversampling-free) multiscale approaches. In the primal and hybridized primal settings, we recover the equivalence result of~\cite{CFELV:22} between the first-level MsHHO and MHM methods. In the mixed and hybridized mixed settings, in comparison, our first-level equivalence result is novel (even the methods seem to be, to some extent). In the second part of this article, we assume that the mobility tensor is cell-wise constant (yet anisotropic), and we investigate practical situations in which our virtual construction is amenable to a practical implementation as a one-level method. It turns out that there is essentially one single situation in which the virtual construction is directly usable, which is the bubble-enriched nonconforming-$\Pol^1$/lowest-order Raviart--Thomas potential/flux pair (on simplices or hyperrectangles). In passing, we demonstrate that this equivalence is valid on arbitrary simplices (without any geometrical restrictions), which had seemingly never been reported in the literature (cf.~\cite[Sec.~4]{ChenZ:93} and~\cite[Sec.~10]{ArbCh:95}). Finally, we investigate situations in which the virtual construction cannot be computed. It leads us to rediscover projection finite element methods, extending the latter concept (under the shortened name of projection methods) to virtual potential spaces. This allows us, in turn, to reinterpret some self-stabilized hybrid high-order or weak Galerkin methods as projection schemes, thereby providing mixed finite element equivalents for them. We also derive, in the process, a simple criterion (to be checked in practice), valid on general polytopal meshes, to ensure primal/mixed (uniform) well-posedness and equivalence. This criterion is essentially a reformulation of the criterion of~\cite[Thm.~4]{ArbCh:95}.

The material of this article is organized as follows. In Section~\ref{se:cont}, we hybridize Problems~\ref{pb:c.primal} and~\ref{pb:c.mixed}, and we establish primal/mixed equivalence at the continuous level. In Section~\ref{se:fdc}, we introduce a finite-dimensional virtual counterpart of the construction of Section~\ref{se:cont}, and discuss its implications in terms of primal/mixed equivalence for first-level multiscale methods. In Section~\ref{se:proj}, we investigate the practical implementation of the latter construction, revisiting the concept of projection method and drawing links with some self-stabilized hybrid methods.

\section{An illuminating observation} \label{se:cont}

Let us assume that we are given a Lipschitz partition of the domain $\Omega$. More precisely, there exists a finite collection $\KK\defi\{K\}$ of (non-empty, disjoint) Lipschitz open subdomains $K\subset\Omega$ such that $\bigcup_{K\in\KK}\bar{K}=\bar{\Omega}$. In what follows, we let $\pKK\defi\bigcup_{K\in\KK}\pK$ denote the skeleton of the partition. For all $K\in\KK$, we also let $\vN_{\pK}:\pK\to\R^d$ denote the (a.e.~defined) unit normal vector field to $\pK$ pointing outward from $K$. We introduce on the partition $\KK$ the following broken functional spaces:
\begin{align*}
  \Hdiv[\KK]&\defi\left\{\vec{\tau}_{\KK}\in\vLL\mid\vec{\tau}_K\defi\vec{\tau}_{\KK\mid K}\in\Hdiv[K]\;\forall K\in\KK\right\},
  \\
  \Hone[\KK]&\defi\left\{v_{\KK}\in\LL\mid v_K\defi v_{\KK\mid K}\in\Hone[K]\;\forall K\in\KK\right\},
\end{align*}
as well as the broken operators $\Div_\KK:\Hdiv[\KK]\to\LL$ and $\Grad_\KK:\Hone[\KK]\to\vLL$ such that $(\Div_\KK\vec{\tau}_{\KK})_{\mid K}\defi\Div\vec{\tau}_K$ and $(\Grad_\KK v_{\KK})_{\mid K}\defi\Grad v_K$ for all $K\in\KK$.
We also introduce the following skeletal trace spaces:
\begin{align}
  \Hmhalf&\defi\left\{\tau_{\pKK}\defi(\tau_{\pK})_{K\in\KK}\in\prod_{K\in\KK}\Hmhalf[\pK]
  \left |\!
  \begin{array}{l}
    \exists\,\vec{\tau}\in\Hdiv\text{ s.t.}
    \\
    \tau_{\pK} = \vec{\tau}_{\mid\pK}{\cdot}\vN_{\pK} \;\forall K \in\KK
  \end{array}
  \right .
  \!\!\!\right\},\label{eq:c.Hmhalf}
  \\
  \Hzphalf&\defi\left\{v_{\pKK}\defi(v_{\pK})_{K\in\KK}\in\prod_{K\in\KK}\Hphalf[\pK]
  \left |\!
  \begin{array}{l}
    \exists\,v \in\Hzone\text{ s.t.}
    \\
    v_{\pK} = v_{\mid\pK} \;\forall K \in\KK
  \end{array}
  \right .
  \!\!\!\right\}.\label{eq:c.Hzphalf}
\end{align}
For all $\tau_{\pKK}\in\prod_{K\in\KK}\Hmhalf[\pK]$ and $v_{\pKK}\in\prod_{K\in\KK}\Hphalf[\pK]$, we define
\[\langle\tau_{\pKK},v_{\pKK}\rangle_{\pKK}\defi\sum_{K\in\KK}\langle\tau_{\pK},v_{\pK}\rangle_{\pK},\]
where $\langle\cdot,\cdot\rangle_{\pK}$ stands for the standard duality pairing between $\Hmhalf[\pK]$ and $\Hphalf[\pK]$.
Interestingly, for all $\tau_{\pKK}\in\Hmhalf$ and $v_{\pKK}\in\Hzphalf$, there holds
\begin{equation} \label{eq:c.fund}
  \langle\tau_{\pKK},v_{\pKK}\rangle_{\pKK}=\sum_{K\in\KK}\big((\Div\vec{\tau},v)_K+(\vec{\tau},\Grad v)_K\big)=(\Div\vec{\tau},v)_{\Omega}+(\vec{\tau},\Grad v)_{\Omega}=0.
\end{equation}
In what follows, for $\vec{\tau}_\KK\in\Hdiv[\KK]$, we will denote by $\vec{\tau}_{\KK\mid\pKK}{\cdot}\vN_{\pKK}\in\prod_{K\in\KK}\Hmhalf[\pK]$ the collection $(\vec{\tau}_{K\mid\pK}{\cdot}\vN_{\pK})_{K\in\KK}$.
Similarly, for $v_\KK\in\Hone[\KK]$, $v_{\KK\mid\pKK}\in\prod_{K\in\KK}\Hphalf[\pK]$ will henceforth denote the collection $(v_{K\mid\pK})_{K\in\KK}$.
The space characterizations below hold true:
\begin{align}
  \Hdiv&=\big\{\vec{\tau}_\KK\in\Hdiv[\KK]\mid\langle\vec{\tau}_{\KK\mid\pKK}{\cdot}\vN_{\pKK}, v_{\pKK}\rangle_{\pKK}=0\;\;\forall v_{\pKK}\in\Hzphalf\big\},\label{eq:c.Hdiv}\\
  \Hzone&=\big\{v_\KK\in\Hone[\KK]\mid\langle\tau_{\pKK},v_{\KK\mid\pKK}\rangle_{\pKK}=0\;\;\forall\tau_{\pKK}\in\Hmhalf\big\}.\label{eq:c.Hzone}
\end{align}

With the partition $\KK$ at hand, it is possible to introduce two additional weak forms for Problem~\ref{pb:strong}. These two novel variational formulations will be referred to in the sequel as {\em hybridized}. They have the particularity to lie in-between primal and mixed, in the sense that they share defining features of both types.
The hybridized primal weak form of Problem~\ref{pb:strong}, obtained from Problem~\ref{pb:c.primal} by relaxing along $\pKK$ the $\Hzone$-conformity constraint on $u$, seeks a couple potential/normal flux trace $(u_\KK,\sigma_{\pKK})\in\Hone[\KK]\times\Hmhalf$ satisfying
\begin{problem}[Hybridized primal] \label{pb:c.hprimal}
  Find $(u_\KK,\sigma_{\pKK})\in\Hone[\KK]\times\Hmhalf$ s.t.
  \begin{subequations} \label{eq:c.hprimal}
    \begin{alignat}{2}
      (\tb\Grad_\KK u_\KK,\Grad_\KK v_\KK)_{\Omega}-\langle\sigma_{\pKK},v_{\KK\mid\pKK}\rangle_{\pKK} &= (f,v_\KK)_{\Omega} &\qquad&\forall v_\KK\in\Hone[\KK], \label{eq:c.hprimal.a}
      \\
      \langle\tau_{\pKK},u_{\KK\mid\pKK}\rangle_{\pKK} &= 0 &\qquad&\forall\tau_{\pKK}\in\Hmhalf. \label{eq:c.hprimal.b}
    \end{alignat}
  \end{subequations}
\end{problem}
\noindent
In turn, the hybridized mixed weak form of Problem~\ref{pb:strong}, obtained from Problem~\ref{pb:c.mixed} by relaxing along $\pKK$ the $\Hdiv$-conformity constraint on $\vec{\sigma}$, seeks a triple flux/potential/potential trace $(\vec{\sigma}_\KK,u_\KK,u_{\pKK})\in\Hdiv[\KK]\times\LL\times\Hzphalf$ satisfying
\begin{problem}[Hybridized mixed] \label{pb:c.hmixed}
  Find $(\vec{\sigma}_\KK,u_\KK,u_{\pKK})\in\Hdiv[\KK]\times\LL\times\Hzphalf$ s.t.
  \begin{subequations} \label{eq:c.hmixed}
    \begin{alignat}{2}
      (\tb^{-1}\vec{\sigma}_\KK,\vec{\tau}_\KK)_{\Omega}+(u_\KK,\Div_\KK\vec{\tau}_\KK)_{\Omega}-\langle\vec{\tau}_{\KK\mid\pKK}{\cdot}\vN_{\pKK}, u_{\pKK}\rangle_{\pKK} &= 0 &\quad&\forall\vec{\tau}_\KK\in\Hdiv[\KK], \label{eq:c.hmixed.a}
      \\
      -(\Div_\KK\vec{\sigma}_\KK,v_\KK)_{\Omega} &= (f,v_\KK)_{\Omega} &\quad&\forall v_\KK\in\LL, \label{eq:c.hmixed.b}
      \\
      \langle\vec{\sigma}_{\KK\mid\pKK}{\cdot}\vN_{\pKK}, v_{\pKK}\rangle_{\pKK} &= 0&\quad&\forall v_{\pKK}\in\Hzphalf. \label{eq:c.hmixed.c}
    \end{alignat}
  \end{subequations}
\end{problem}
\noindent
Remark that the hybridized primal Problem~\ref{pb:c.hprimal} (resp.~the hybridized mixed Problem~\ref{pb:c.hmixed}) accounts for a variational principle based on the potential energy (resp.~the complementary energy), which is a defining (energetic) feature of primal (resp.~mixed) formulations, but at the same time its globally coupled face unknowns are of flux type (resp.~of potential type), which is a defining (algebraic) feature of mixed methods (resp.~primal methods).
It is an easy matter to realize that the two primal formulations (Problems~\ref{pb:c.primal} and~\ref{pb:c.hprimal}) are equivalent, which is also true of the two mixed formulations (Problems~\ref{pb:c.mixed} and~\ref{pb:c.hmixed}). 
\begin{proposition}[Equivalence of primal forms] \label{pr:c.eqprimal}
  Problems~\ref{pb:c.primal} and~\ref{pb:c.hprimal} are equivalent:
  \begin{itemize}
    \item[$\Rightarrow$] if $u\in\Hzone$ solves Problem~\ref{pb:c.primal}, then there exists $\sigma_{\pKK}\in\Hmhalf$ such that the couple $(u_\KK\defi u,\sigma_{\pKK})\in\Hone[\KK]\times\Hmhalf$ solves Problem~\ref{pb:c.hprimal};
    \item[$\Leftarrow$] reciprocally, if the couple $(u_\KK,\sigma_{\pKK})\in\Hone[\KK]\times\Hmhalf$ solves Problem~\ref{pb:c.hprimal}, then there holds $u_\KK\in\Hzone$, and $u\defi u_\KK\in\Hzone$ solves Problem~\ref{pb:c.primal}.
  \end{itemize}
\end{proposition}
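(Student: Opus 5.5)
The two implications will be handled separately. Everything rests on the characterization~\eqref{eq:c.Hzone} of $\Hzone$, on the identity~\eqref{eq:c.fund}, and on the definition~\eqref{eq:c.Hmhalf} of $\Hmhalf$ as normal traces of globally $\Hdiv$ fields.

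For $(\Leftarrow)$, equation~\eqref{eq:c.hprimal.b} says exactly that $\langle\tau_{\pKK},u_{\KK\mid\pKK}\rangle_{\pKK}=0$ for all $\tau_{\pKK}\in\Hmhalf$. By~\eqref{eq:c.Hzone}, this gives $u_\KK\in\Hzone$. Next I test~\eqref{eq:c.hprimal.a} with an arbitrary $v\in\Hzone\subset\Hone[\KK]$. Its trace collection $v_{\KK\mid\pKK}$ lies in $\Hzphalf$, so~\eqref{eq:c.fund} kills the term $\langle\sigma_{\pKK},v_{\KK\mid\pKK}\rangle_{\pKK}$. Since $\Grad_\KK$ coincides with $\Grad$ on $\Hzone$, what remains is~\eqref{eq:c.primal}.

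For $(\Rightarrow)$, the task is to produce $\sigma_{\pKK}$, and the natural candidate is the normal trace of the physical flux $\vec{\sigma}\defi\tb\Grad u\in\vLL$. Testing~\eqref{eq:c.primal} with $v\in C^\infty_c(\Omega)$ gives $-\Div\vec{\sigma}=f$ in the distributional sense. Since $f\in\LL$, this yields $\vec{\sigma}\in\Hdiv$. I then set $\sigma_{\pK}\defi\vec{\sigma}_{\mid\pK}{\cdot}\vN_{\pK}$ for every $K\in\KK$; by~\eqref{eq:c.Hmhalf}, $\sigma_{\pKK}\in\Hmhalf$. Equation~\eqref{eq:c.hprimal.b} holds because $u\in\Hzone$, by~\eqref{eq:c.fund} (equivalently~\eqref{eq:c.Hzone}). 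For~\eqref{eq:c.hprimal.a}, I take $v_\KK\in\Hone[\KK]$ and apply the cell-wise Green formula $\langle\vec{\sigma}_{\mid\pK}{\cdot}\vN_{\pK},v_{K\mid\pK}\rangle_{\pK}=(\Div\vec{\sigma},v_K)_K+(\vec{\sigma},\Grad v_K)_K$, which is valid for $\vec{\sigma}_{\mid K}\in\Hdiv[K]$ and $v_K\in\Hone[K]$. Summing over $K$ and using $\Div\vec{\sigma}=-f$ gives
\[
(\tb\Grad_\KK u,\Grad_\KK v_\KK)_\Omega-\langle\sigma_{\pKK},v_{\KK\mid\pKK}\rangle_{\pKK}=(f,v_\KK)_\Omega,
\]
which is~\eqref{eq:c.hprimal.a}.

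The only step needing care is showing that $\sigma_{\pKK}$ lies in $\Hmhalf$. This requires the global regularity $\tb\Grad u\in\Hdiv$, which comes from the distributional equation together with $f\in\LL$. I also need that the cell-wise Green formula holds with the correct duality pairing, which is standard on Lipschitz cells. Once these are secured, the rest is bookkeeping.
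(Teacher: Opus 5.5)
Your proof is correct. The ($\Leftarrow$) direction coincides with the paper's argument. You cancel the multiplier term via \eqref{eq:c.fund}, which is just the ``$\subseteq$'' half of \eqref{eq:c.Hzone} that the paper invokes.

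For ($\Rightarrow$) you take a genuinely different route. The paper only observes that, by \eqref{eq:c.Hzone}, $\Hzone$ is the kernel of the coupling $(\tau_{\pKK},v_\KK)\mapsto\langle\tau_{\pKK},v_{\KK\mid\pKK}\rangle_{\pKK}$, and then appeals to the abstract theory of Lagrange multipliers. You instead build the multiplier explicitly as $\sigma_{\pKK}\defi(\tb\Grad u)_{\mid\pKK}{\cdot}\vN_{\pKK}$. This works because $-\Div(\tb\Grad u)=f\in\LL$ forces $\tb\Grad u\in\Hdiv$, hence $\sigma_{\pKK}\in\Hmhalf$. You then check \eqref{eq:c.hprimal.a} by summing cell-wise Green formulas.

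Your construction has two advantages. First, it is self-contained: it sidesteps the closed-range (inf-sup) property of the coupling. The abstract argument implicitly needs that property in infinite dimension, to identify the annihilator of $\Hzone$ in $(\Hone[\KK])'$ with the range of the transposed constraint operator rather than merely its closure. Second, it identifies the multiplier as the physical normal flux, which step~(a) of Lemma~\ref{le:c.ptom} only recovers a posteriori.

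The paper's argument has its own advantages. It is shorter and uses nothing about the equation beyond the kernel characterization. It also mirrors Proposition~\ref{pr:c.eqmixed} and carries over verbatim to the discrete setting of Section~\ref{se:fdc}, where existence of the multiplier is plain finite-dimensional linear algebra.
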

\begin{proof}
  We prove each implication separately.
  \begin{itemize}
    \item[$\Rightarrow$] Since, by~\eqref{eq:c.Hzone}, $\Hzone=\big\{v_\KK\in\Hone[\KK]\mid\langle\tau_{\pKK},v_{\KK\mid\pKK}\rangle_{\pKK}=0\;\forall\tau_{\pKK}\in\Hmhalf\big\}$, the result follows from the theory of Lagrange multipliers.
    \item[$\Leftarrow$] From~\eqref{eq:c.hprimal.b} and~\eqref{eq:c.Hzone}, we first infer that $u_\KK\in\Hzone$. We conclude testing~\eqref{eq:c.hprimal.a} with $v_\KK\in\Hzone\subset\Hone[\KK]$, and invoking again~\eqref{eq:c.Hzone} to cancel out the term $\langle\sigma_{\pKK},v_{\KK\mid\pKK}\rangle_{\pKK}$. 
  \end{itemize}
\end{proof}
\begin{proposition}[Equivalence of mixed forms] \label{pr:c.eqmixed}
  Problems~\ref{pb:c.mixed} and~\ref{pb:c.hmixed} are equivalent:
  \begin{itemize}
    \item[$\Rightarrow$] if the couple $(\vec{\sigma},u)\in\Hdiv\times\LL$ solves Problem~\ref{pb:c.mixed}, then there is $u_{\pKK}\in\Hzphalf$ such that the triple $(\vec{\sigma}_\KK\defi\vec{\sigma},u_\KK\defi u,u_{\pKK})\in\Hdiv[\KK]\times\LL\times\Hzphalf$ solves Problem~\ref{pb:c.hmixed};
    \item[$\Leftarrow$] reciprocally, if the triple $(\vec{\sigma}_\KK,u_\KK,u_{\pKK})\in\Hdiv[\KK]\times\LL\times\Hzphalf$ solves Problem~\ref{pb:c.hmixed}, then there holds $\vec{\sigma}_\KK\in\Hdiv$, and the couple $(\vec{\sigma}\defi\vec{\sigma}_\KK,u\defi u_\KK)\in\Hdiv\times\LL$ solves Problem~\ref{pb:c.mixed}.
  \end{itemize}
\end{proposition}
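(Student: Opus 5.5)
I will prove the two implications separately, mirroring the proof of Proposition~\ref{pr:c.eqprimal} but now using the characterization~\eqref{eq:c.Hdiv} of $\Hdiv$ in place of~\eqref{eq:c.Hzone}.

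For the direction $\Leftarrow$, I proceed in three steps. First, by~\eqref{eq:c.hmixed.c} and~\eqref{eq:c.Hdiv}, $\vec{\sigma}_\KK\in\Hdiv$, and then $\Div_\KK\vec{\sigma}_\KK=\Div\vec{\sigma}_\KK$. Second, since $\LL$ is the same space in both problems, \eqref{eq:c.hmixed.b} is exactly~\eqref{eq:c.mixed.b}. Third, I test~\eqref{eq:c.hmixed.a} with $\vec{\tau}_\KK\defi\vec{\tau}\in\Hdiv\subset\Hdiv[\KK]$. The normal trace collection $\vec{\tau}_{\mid\pKK}{\cdot}\vN_{\pKK}$ lies in $\Hmhalf$ by definition~\eqref{eq:c.Hmhalf}, and $u_{\pKK}\in\Hzphalf$. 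Hence the skeletal term vanishes by~\eqref{eq:c.fund}, which gives~\eqref{eq:c.mixed.a}.

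For the direction $\Rightarrow$, I argue via Lagrange multipliers. Problem~\ref{pb:c.mixed} is the constrained saddle-point problem posed on the subspace~\eqref{eq:c.Hdiv} of $\Hdiv[\KK]$. I write the map $\vec{\tau}_\KK\mapsto(\tau_K)_K\defi\vec{\tau}_{\KK\mid\pKK}{\cdot}\vN_{\pKK}$ and define the constraint operator $B:\Hdiv[\KK]\to(\Hzphalf)'$ by $\langle B\vec{\tau}_\KK,v_{\pKK}\rangle\defi\langle\vec{\tau}_{\KK\mid\pKK}{\cdot}\vN_{\pKK},v_{\pKK}\rangle_{\pKK}$. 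By~\eqref{eq:c.Hdiv}, $\ker B=\Hdiv$. I then consider the functional
\[\ell(\vec{\tau}_\KK)\defi(\tb^{-1}\vec{\sigma},\vec{\tau}_\KK)_{\Omega}+(u,\Div_\KK\vec{\tau}_\KK)_{\Omega},\]
which is continuous on $\Hdiv[\KK]$ and vanishes on $\ker B$ by~\eqref{eq:c.mixed.a}. By the closed range theorem, I expect there is $u_{\pKK}\in\Hzphalf$ with $\ell=B^*u_{\pKK}$, which is exactly~\eqref{eq:c.hmixed.a}. Equations~\eqref{eq:c.hmixed.b} and~\eqref{eq:c.hmixed.c} then follow from~\eqref{eq:c.mixed.b} and~\eqref{eq:c.Hdiv}, since $\vec{\sigma}\in\Hdiv$.

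The main obstacle is that $B$ must have closed range, i.e.\ an inf-sup condition on $\Hzphalf$. Equipping $\Hzphalf$ with the quotient norm $\inf\{\|v\|_{H^1(\Omega)}: v_{\mid\pK}=v_{\pK}\ \forall K\}$, I would try to build, for a given $v_{\pKK}$, a flux $\vec{\tau}_K$ in each cell by solving a local Neumann-type problem: find $w_K\in\Hone[K]$ with $-\Delta w_K+w_K=0$ and $\vec{\tau}_K\defi\Grad w_K$ having normal trace equal to the Riesz representative of $v_{\pK}$ in $\Hphalf[\pK]$. This should give $\langle B\vec{\tau}_\KK,v_{\pKK}\rangle\gtrsim\|v_{\pKK}\|^2$ with $\|\vec{\tau}_\KK\|_{\Hdiv[\KK]}\lesssim\|v_{\pKK}\|$, where the norms on $\Hphalf[\pK]$ are the cellwise trace norms. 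Alternatively, this inf-sup condition can be taken as the standard fact underlying~\eqref{eq:c.Hdiv}.
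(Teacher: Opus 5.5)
Your proof is correct and follows the paper's route. The $\Leftarrow$ direction is the same direct verification, using~\eqref{eq:c.fund} (equivalently~\eqref{eq:c.Hdiv}) to kill the skeletal term, and the $\Rightarrow$ direction is the Lagrange-multiplier argument the paper invokes; your closed-range/inf-sup discussion just makes explicit what the paper leaves to ``the theory of Lagrange multipliers'', and your local Neumann construction (which produces the $(-\Delta+1)$-harmonic extension of $v_{\pK}$ in each cell, giving inf-sup constant $1$ for the quotient norm on $\Hzphalf$) does supply it. You should, however, drop the fallback sentence: the inf-sup condition is not a consequence of the kernel characterization~\eqref{eq:c.Hdiv}.
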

\begin{proof}
  We prove each implication separately.
  \begin{itemize}
    \item[$\Rightarrow$] Since, by~\eqref{eq:c.Hdiv}, $\Hdiv=\big\{\vec{\tau}_\KK\in\Hdiv[\KK]\mid\langle\vec{\tau}_{\KK\mid\pKK}{\cdot}\vN_{\pKK}, v_{\pKK}\rangle_{\pKK}=0\;\forall v_{\pKK}\in\Hzphalf\big\}$, the result follows from the theory of Lagrange multipliers.
    \item[$\Leftarrow$] From~\eqref{eq:c.hmixed.c} and~\eqref{eq:c.Hdiv}, we first infer that $\vec{\sigma}_\KK\in\Hdiv$. We conclude testing~\eqref{eq:c.hmixed.a} with $\vec{\tau}_\KK\in\Hdiv\subset\Hdiv[\KK]$, and invoking again~\eqref{eq:c.Hdiv} to cancel out the term $\langle\vec{\tau}_{\KK\mid\pKK}{\cdot}\vN_{\pKK}, u_{\pKK}\rangle_{\pKK}$. 
  \end{itemize}
\end{proof}
We now aim at drawing a path (i) from primal to mixed formulations on the one side, and (ii) from mixed to primal formulations on the other side. To perform so, we prove below that Problem~\ref{pb:c.hprimal} (hybridized primal) implies Problem~\ref{pb:c.mixed} (mixed), and that Problem~\ref{pb:c.hmixed} (hybridized mixed) implies Problem~\ref{pb:c.primal} (primal).
\begin{lemma}[From primal to mixed] \label{le:c.ptom}
  Assume the couple $(u_\KK,\sigma_{\pKK})\in\Hone[\KK]\times\Hmhalf$ solves Problem~\ref{pb:c.hprimal}. Then, there holds $\tb\Grad_{\KK}u_\KK\in\Hdiv$, and the couple $(\vec{\sigma}\defi\tb\Grad_{\KK}u_\KK,u\defi u_\KK)\in\Hdiv\times\LL$ solves Problem~\ref{pb:c.mixed}.
\end{lemma}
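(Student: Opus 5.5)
The plan is to first use~\eqref{eq:c.hprimal.b} and~\eqref{eq:c.Hzone} to get $u_\KK\in\Hzone$, exactly as in Proposition~\ref{pr:c.eqprimal}. Then I will read~\eqref{eq:c.hprimal.a} cell by cell, which gives both the $\Hdiv$-conformity of $\vec{\sigma}\defi\tb\Grad_\KK u_\KK$ and the two mixed equations.

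\emph{Local step.} Fix $K\in\KK$ and test~\eqref{eq:c.hprimal.a} with $v_\KK$ supported in $K$, with $v_K\in H^1_0(K)$. The skeletal term vanishes, since $v_{\KK\mid\pKK}=0$. We obtain $(\tb\Grad u_K,\Grad v_K)_K=(f,v_K)_K$ for all $v_K\in H^1_0(K)$. Hence $-\Div(\tb\Grad u_K)=f$ in the distributional sense on $K$, so $\vec{\sigma}_K\defi\tb\Grad u_K\in\Hdiv[K]$. It follows that $\vec{\sigma}\in\Hdiv[\KK]$ with $-\Div_\KK\vec{\sigma}=f$.

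\emph{Conformity step (main point).} Now test~\eqref{eq:c.hprimal.a} with a general $v_\KK\in\Hone[\KK]$ and integrate by parts on each $K$ using the local equation:
\[
(\tb\Grad_\KK u_\KK,\Grad_\KK v_\KK)_\Omega-(f,v_\KK)_\Omega=\langle\vec{\sigma}_{\KK\mid\pKK}{\cdot}\vN_{\pKK},v_{\KK\mid\pKK}\rangle_{\pKK}.
\]
Hence $\langle\vec{\sigma}_{\KK\mid\pKK}{\cdot}\vN_{\pKK}-\sigma_{\pKK},v_{\KK\mid\pKK}\rangle_{\pKK}=0$ for all $v_\KK\in\Hone[\KK]$. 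The traces $v_{K\mid\pK}$ range independently over all of $\Hphalf[\pK]$ for each $K$, by surjectivity of the trace on Lipschitz domains. Therefore $\vec{\sigma}_{K\mid\pK}{\cdot}\vN_{\pK}=\sigma_{\pK}$ for every $K$, that is, $\vec{\sigma}_{\KK\mid\pKK}{\cdot}\vN_{\pKK}=\sigma_{\pKK}\in\Hmhalf$. Combining~\eqref{eq:c.fund} (valid since $\sigma_{\pKK}\in\Hmhalf$) with~\eqref{eq:c.Hdiv} then gives $\vec{\sigma}\in\Hdiv$. The only delicate point I expect is justifying the identification of normal traces, which rests on this surjectivity and on the definition of $\Hmhalf$.

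\emph{Mixed equations.} Equation~\eqref{eq:c.mixed.b} follows from $-\Div\vec{\sigma}=-\Div_\KK\vec{\sigma}=f$. For~\eqref{eq:c.mixed.a}, take $\vec{\tau}\in\Hdiv$. Since $u\in\Hzone$, Green's formula on $\Omega$ (no boundary term, as $u=0$ on $\partial\Omega$) gives $(u,\Div\vec{\tau})_\Omega=-(\Grad u,\vec{\tau})_\Omega$. Moreover $\Grad_\KK u_\KK=\Grad u$ because $u_\KK\in\Hzone$, so $(\tb^{-1}\vec{\sigma},\vec{\tau})_\Omega=(\Grad u,\vec{\tau})_\Omega$. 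The two terms cancel, which gives~\eqref{eq:c.mixed.a} and concludes the proof.
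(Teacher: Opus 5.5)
Your proof is correct. Your local and conformity steps coincide with step~(a) of the paper's proof; you are actually more explicit about why the normal traces can be identified (surjectivity of the trace on each Lipschitz cell, and independence of the cells in $\Hone[\KK]$).

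The real difference is in how you obtain~\eqref{eq:c.mixed.a}. You first upgrade $u_\KK$ to $\Hzone$ via~\eqref{eq:c.hprimal.b} and~\eqref{eq:c.Hzone}, in effect passing through Proposition~\ref{pr:c.eqprimal}, and then apply Green's formula on all of $\Omega$. The paper never shows $u_\KK\in\Hzone$ in this lemma. Instead it integrates by parts cell by cell and tests~\eqref{eq:c.hprimal.b} directly with $\tau_{\pKK}\defi\vec{\tau}_{\mid\pKK}{\cdot}\vN_{\pKK}$, which lies in $\Hmhalf$ by the very definition of that space.

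Your route is quicker and gives $u\in\Hzone$ as a by-product. However, it relies on the less elementary inclusion in~\eqref{eq:c.Hzone}, namely that a broken $H^1$ function annihilating all of $\Hmhalf$ belongs to $\Hzone$. The paper's step~(b) only uses the trivial fact that normal traces of $\Hdiv$ fields belong to $\Hmhalf$.

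More importantly, the paper's broken-level computation is the one that transfers to the discrete setting. In Lemma~\ref{le:d.ptom} the potential only lies in $\tilHzone\not\subset\Hzone$, so no global Green formula is available. The same cell-by-cell identity still goes through verbatim, combined with~\eqref{eq:d.hprimal.b} and the fact that $\Div\vec{\tau}\in\Pol^{\ell-1}(K)$, which lets one drop the projector in $u\defi\projKK(u_\KK)$.
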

\begin{proof}
  The result falls in two steps.\\
  (a) First, in each subdomain $K\in\KK$, we test~\eqref{eq:c.hprimal.a} with $v_K\in\Hone[K]$. For all $v_K\in\Hzone[K]$, there holds $(\tb\Grad u_K,\Grad v_K)_K=(f,v_K)_K$, thereby $\vec{\sigma}_K\defi\tb\Grad u_K\in\Hdiv[K]$. We can thus integrate by parts the term $(\vec{\sigma}_K,\Grad v_K)_K$, yielding
  \[-(\Div\vec{\sigma}_K,v_K)_K+\langle\vec{\sigma}_{K\mid\pK}{\cdot}\vN_{\pK}-\sigma_{\pK},v_{K\mid\pK}\rangle_{\pK}=(f,v_K)_K.\]
  Since the above identity is valid for any $v_K\in\Hone[K]$, we both infer that $-\Div\vec{\sigma}_K=f$ a.e.~in $K$, and that $\vec{\sigma}_{K\mid\pK}{\cdot}\vN_{\pK}=\sigma_{\pK}$ in $\Hmhalf[\pK]$. Then, because $\sigma_{\pKK}\in\Hmhalf$, it follows that $\vec{\sigma}_\KK=\tb\Grad_\KK u_\KK\in\Hdiv$, and that $-(\Div\vec{\sigma}_\KK,v)_{\Omega}=(f,v)_{\Omega}$ for all $v\in\LL$.\\
  (b) Second, we prove that $(\tb^{-1}\vec{\sigma}_\KK,\vec{\tau})_{\Omega}+(u_\KK,\Div\vec{\tau})_{\Omega}=0$ for all $\vec{\tau}\in\Hdiv$.
  The conclusion follows setting $(\vec{\sigma}\defi\vec{\sigma}_\KK,u\defi u_\KK)\in\Hdiv\times\LL$.
  For any $\vec{\tau}\in\Hdiv$, letting $\tau_{\pKK}\in\prod_{K\in\KK}\Hmhalf[\pK]$ be such that $\tau_{\pKK}\defi\vec{\tau}_{\mid\pKK}{\cdot}\vN_{\pKK}$, there holds $\tau_{\pKK}\in\Hmhalf$. Then, recalling that $\vec{\sigma}_K=\tb\Grad u_K$ for all $K\in\KK$, and invoking~\eqref{eq:c.hprimal.b}, we infer
  \begin{align*}
    (\tb^{-1}\vec{\sigma}_\KK,\vec{\tau})_{\Omega}+(u_\KK,\Div\vec{\tau})_{\Omega}&=\sum_{K\in\KK}\big((\Grad u_K,\vec{\tau})_K+(u_K,\Div\vec{\tau})_K\big)\\
    &=\sum_{K\in\KK}\langle\tau_{\pK},u_{K\mid\pK}\rangle_{\pK}=\langle\tau_{\pKK},u_{\KK\mid\pKK}\rangle_{\pKK}=0.
  \end{align*}
\end{proof}
\begin{lemma}[From mixed to primal] \label{le:c.mtop}
  Assume the triple $(\vec{\sigma}_\KK,u_\KK,u_{\pKK})\in\Hdiv[\KK]\times\LL\times\Hzphalf$ solves Problem~\ref{pb:c.hmixed}. Then, there holds $u_\KK\in\Hzone$, and $u\defi u_\KK\in\Hzone$ solves Problem~\ref{pb:c.primal}.
\end{lemma}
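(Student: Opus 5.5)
The proof will mirror that of Lemma~\ref{le:c.ptom}, with the roles of flux and potential exchanged, and proceeds in two steps.

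\textbf{Step (a): local regularity and identification of traces.} On each subdomain $K\in\KK$, I will test~\eqref{eq:c.hmixed.a} with $\vec{\tau}_\KK$ supported in $K$, taking $\vec{\tau}_K\in\vec{C}^\infty_c(K)^d$ first. Since $\vec{\tau}_{K\mid\pK}{\cdot}\vN_{\pK}=0$, this gives $(\tb^{-1}\vec{\sigma}_K,\vec{\tau}_K)_K+(u_K,\Div\vec{\tau}_K)_K=0$. In the distributional sense, this reads $\Grad u_K=\tb^{-1}\vec{\sigma}_K\in\vLL[K]$, so $u_K\in\Hone[K]$ and hence $u_\KK\in\Hone[\KK]$. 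Next I will take general $\vec{\tau}_K\in\Hdiv[K]$ and integrate by parts the term $(u_K,\Div\vec{\tau}_K)_K$. Using $\tb^{-1}\vec{\sigma}_K=\Grad u_K$, this yields
\[\langle\vec{\tau}_{K\mid\pK}{\cdot}\vN_{\pK},u_{K\mid\pK}-u_{\pK}\rangle_{\pK}=0\qquad\forall\vec{\tau}_K\in\Hdiv[K].\]
The normal trace map $\Hdiv[K]\to\Hmhalf[\pK]$ is onto, so $u_{K\mid\pK}=u_{\pK}$ in $\Hphalf[\pK]$ for every $K$, i.e.\ $u_{\KK\mid\pKK}=u_{\pKK}$. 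Since $u_{\pKK}\in\Hzphalf$, every $\tau_{\pKK}\in\Hmhalf$ satisfies $\langle\tau_{\pKK},u_{\KK\mid\pKK}\rangle_{\pKK}=0$ by~\eqref{eq:c.fund}. By~\eqref{eq:c.Hzone}, this gives $u_\KK\in\Hzone$ with $\Grad u_\KK=\Grad_\KK u_\KK$, and therefore $\vec{\sigma}_\KK=\tb\Grad u_\KK$.

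\textbf{Step (b): the primal equation.} Let $v\in\Hzone$. By~\eqref{eq:c.hmixed.c} and~\eqref{eq:c.Hdiv}, $\vec{\sigma}_\KK\in\Hdiv$; equivalently, element-wise integration by parts can be used directly. Then
\begin{align*}
(\tb\Grad u_\KK,\Grad v)_\Omega&=(\vec{\sigma}_\KK,\Grad v)_\Omega=\sum_{K\in\KK}\big(-(\Div\vec{\sigma}_K,v)_K+\langle\vec{\sigma}_{K\mid\pK}{\cdot}\vN_{\pK},v_{\mid\pK}\rangle_{\pK}\big)\\
&=-(\Div_\KK\vec{\sigma}_\KK,v)_\Omega+\langle\vec{\sigma}_{\KK\mid\pKK}{\cdot}\vN_{\pKK},v_{\mid\pKK}\rangle_{\pKK}=(f,v)_\Omega.
\end{align*}
In the last equality, the skeletal term vanishes by~\eqref{eq:c.hmixed.c} with $v_{\pKK}\defi v_{\mid\pKK}\in\Hzphalf$, and the volume term is handled by~\eqref{eq:c.hmixed.b} with $v_\KK\defi v\in\LL$. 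Hence $u\defi u_\KK$ solves Problem~\ref{pb:c.primal}.

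\textbf{Main obstacle.} The only delicate point is step (a): deducing $H^1$ regularity of $u_K$ from the weak equation, and then identifying $u_{K\mid\pK}$ with $u_{\pK}$. The latter relies on the surjectivity of the normal trace map $\Hdiv[K]\to\Hmhalf[\pK]$ on Lipschitz domains, which is a standard fact that I will assume.
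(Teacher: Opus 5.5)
Your proof is correct and follows the paper's argument essentially step for step. You test locally to get $u_K\in\Hone[K]$, $\tb^{-1}\vec{\sigma}_K=\Grad u_K$ and $u_{K\mid\pK}=u_{\pK}$, hence $u_\KK\in\Hzone$, and then test~\eqref{eq:c.hmixed.b} with $v\in\Hzone$; the only cosmetic difference is that in step (b) you cancel the skeletal term directly via~\eqref{eq:c.hmixed.c} after element-wise integration by parts, whereas the paper first deduces $\vec{\sigma}_\KK\in\Hdiv$ from~\eqref{eq:c.Hdiv} and integrates by parts globally.
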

\begin{proof}
  The result also falls in two steps.\\
  (a) First, in each subdomain $K\in\KK$, we test~\eqref{eq:c.hmixed.a} with $\vec{\tau}_K\in\Hdiv[K]$. For all $\vec{\tau}_K\in\Hzdiv[K]$, there holds $(\tb^{-1}\vec{\sigma}_K,\vec{\tau}_K)_K+(u_K,\Div\vec{\tau}_K)_K=0$, thereby $u_K\in\Hone[K]$. We can thus integrate by parts the term $(u_K,\Div\vec{\tau}_K)_K$, yielding
  \[(\tb^{-1}\vec{\sigma}_K-\Grad u_K,\vec{\tau}_K)_K+\langle\vec{\tau}_{K\mid\pK}{\cdot}\vN_{\pK}, u_{K\mid\pK}-u_{\pK}\rangle_{\pK}=0.\]
  Since the above identity is valid for any $\vec{\tau}_K\in\Hdiv[K]$, we both infer that $\tb^{-1}\vec{\sigma}_K=\Grad u_K$ a.e.~in $K$, and $u_{K\mid\pK}=u_{\pK}$ a.e.~on $\pK$. Given $u_{\pKK}\in\Hzphalf$, it follows that $u_\KK\in\Hzone$.\\
  (b) From~\eqref{eq:c.hmixed.c} and~\eqref{eq:c.Hdiv}, we directly infer that $\vec{\sigma}_\KK\in\Hdiv$, hence $\Div_\KK\vec{\sigma}_\KK=\Div(\tb\Grad u_\KK)$ (recall that $u_\KK\in\Hzone$). Now, testing~\eqref{eq:c.hmixed.b} with $v_\KK\defi v\in\Hzone\subset\LL$, and integrating its left-hand side by parts, we get that $(\tb\Grad u_\KK,\Grad v)_{\Omega}=(f,v)_{\Omega}$ for all $v\in\Hzone$.
  The conclusion follows setting $u\defi u_\KK\in\Hzone$.
\end{proof}

Let us summarize our developments so far. What we have just established above, following the diagram depicted on Figure~\ref{fig:c.diag}, is the equivalence between primal and mixed (weak) formulations of the problem.
Of particular importance are the two spaces $\Hzone$ and $\Hdiv$ in which lie, respectively, the solution potential $u$ and the solution flux $\vec{\sigma}=\tb\Grad u$. As a matter of fact, these two spaces encode the inter-subdomain conformity requirements that both the potential and the flux shall satisfy over the partition so as to be admissible.
If this primal/mixed equivalence result is natural (and expected) at the infinite-dimensional level, mimicking it at the finite-dimensional level is less obvious.
Several questions then arise:
\begin{itemize}
  \item[$\bullet$] is it possible to find 
    finite-dimensional counterparts of the spaces $\Hzone$ and $\Hdiv$ for which primal/mixed equivalence holds true at the discrete level?
  \item[$\bullet$] which degree of inter-subdomain conformity is it possible to preserve at the finite-dimensional (discrete) level for the potential and flux spaces?
  \item[$\bullet$] to what extent can the discrete counterparts of the solution potential and flux preserve the structure of the constitutive relation $\vec{\sigma}=\tb\Grad u$ from the model at hand?
\end{itemize}
In the next section (that is, Section~\ref{se:fdc}), we provide a partial answer to these questions. We indeed exhibit a finite-dimensional (virtual) construction for which primal/mixed equivalence holds true, and which exactly preserves the structure of the constitutive relation $\vec{\sigma}=\tb\Grad u$. This construction, which happens to be the cornerstone of a number of existing multiscale (a.k.a.~two-level) methods, also preserves full $\Hdiv$-conformity for the flux, as well as a weak $\Hzone$-conformity property for the potential. Unfortunately, and this topic is explored in Section~\ref{se:proj}, there are actually very few situations (in terms of domain partitioning and dimensionality) in which the latter construction is amenable to a practical implementation as a one-level method. We thus explore, in the subsequent developments, alternative (existing) one-level constructions, for which we establish primal/mixed equivalence, but for which we will see that they violate the structure of the constitutive law.
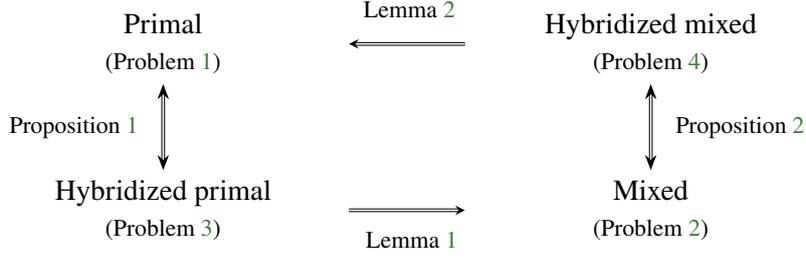
\begin{figure}[h]
  \centering
  \begin{tikzpicture}
    \matrix (m) [matrix of math nodes,row sep=3em,column sep=4em,minimum width=2em]
            {
            \begin{minipage}{0.3\textwidth}\centering\text{Primal}\\\text{{\footnotesize (Problem~\ref{pb:c.primal})}}\end{minipage} & \begin{minipage}{0.3\textwidth}\centering\text{Hybridized mixed}\\\text{{\footnotesize (Problem~\ref{pb:c.hmixed})}}\end{minipage} \\
            \begin{minipage}{0.3\textwidth}\centering\text{Hybridized primal}\\\text{{\footnotesize (Problem~\ref{pb:c.hprimal})}}\end{minipage} & \begin{minipage}{0.3\textwidth}\centering\text{Mixed}\\\text{{\footnotesize (Problem~\ref{pb:c.mixed})}}\end{minipage}\\};
            \path[-stealth]
            (m-1-1) edge [double,Stealth-Stealth] node [left,xshift=-0.5em] {{\footnotesize \text{Proposition~\ref{pr:c.eqprimal}}}} (m-2-1)
            (m-2-1) edge [double] node [right,xshift=-1.75em,yshift=-1em] {{\footnotesize \text{Lemma~\ref{le:c.ptom}}}} (m-2-2)
            (m-1-2) edge [double] node [left,xshift=2em,yshift=1.2em] {{\footnotesize \text{Lemma~\ref{le:c.mtop}}}} (m-1-1)       
            (m-2-2) edge [double,Stealth-Stealth] node [right,xshift=0.5em] {\footnotesize {\text{Proposition~\ref{pr:c.eqmixed}}}} (m-1-2);
  \end{tikzpicture}
  \caption{Equivalence diagram (continuous case): formulations on the first row are potential-based, whereas those on the second row are flux-based.\label{fig:c.diag}}
\end{figure}
  
\section{A finite-dimensional counterpart} \label{se:fdc}

From now on, we assume that $\Omega$ is a (Lipschitz) $d$-polytope, and that the partition $\KK$ is itself (Lipschitz) $d$-polytopal.
From a more formal perspective, we shall consider discretizations $\DD\defi(\KK,\FF)$ of $\Omega$ in the sense of~\cite[Def.~1.4]{DPDro:20}.

\subsection{Polytopal discretizations}

The set $\KK\defi\{K\}$ is a finite collection of (disjoint) Lipschitz open $d$-polytopes $K$ (the mesh cells) such that $\bigcup_{K\in\KK}\bar{K}=\bar{\Omega}$.
The barycenter of any mesh cell $K\in\KK$ is denoted by $\bar{\vec{x}}_K$, its diameter by $h_K$.
In turn, the set $\FF\defi\{F\}$ is a finite collection of (disjoint) connected subsets $F$ of $\pKK\defi\bigcup_{K\in\KK}\pK$ (the mesh faces) such that $\bigcup_{F\in\FF}\bar{F}=\pKK$. Besides, for all $F\in\FF$,
\begin{enumerate}
  \item[(i)] $F$ is a Lipschitz, relatively open $(d{-}1)$-polytopal subset of an affine hyperplane;
  \item[(ii)] either there are two distinct mesh cells $K^+,K^-\in\KK$ s.t.~$\bar{F}\subseteq\pK^+\cap\pK^-$ ($F$ is an interface), or there is one mesh cell $K\in\KK$ s.t.~$\bar{F}\subseteq\pK\cap\partial\Omega$ ($F$ is a boundary face).
\end{enumerate}
Interfaces are collected in the set $\FFi$, whereas boundary faces are collected in the set $\FFb$.
Besides, for all $K\in\KK$, we denote by $\FF_K$ the subset of $\FF$ such that $\bigcup_{F\in\FF_K}\bar{F}=\pK$.
For all $K\in\KK$, we recall that $\vN_{\pK}:\pK\to\R^d$ denotes the unit normal vector field to $\pK$ pointing outward from $K$. For all $F\in\FF_K$, we further let $\vN_{K,F}\defi\vN_{\pK\mid F}$ be the (constant) unit normal vector to the hyperplane containing $F$ pointing outward from $K$.
For all $F\in\FF$, in turn, we define $\vN_F$ as the (constant) unit normal vector to $F$ such that either $\vN_F\defi\vN_{K^+,F}$ if $F\subset\pK^+\cap\pK^-\in\FFi$, or $\vN_F\defi\vN_{K,F}$ if $F\subset\pK\cap\partial\Omega\in\FFb$.
For further use, we finally let, for all $K\in\KK$ and $F\in\FF_K$, $\varepsilon_{K,F}\in\{-1,1\}$ be such that $\varepsilon_{K,F}\defi\vN_{K,F}{\cdot}\vN_F$.

In what follows, locally to any $K\in\KK$, we write $a\lesssim b$ (resp.~$a\gtrsim b$) in place of $a\leq Cb$ (resp.~$a\geq Cb$) if $C>0$ does not depend on $h_K$. Note that the constant $C$ may however depend on the chunkiness parameter (as per~\cite[Def.~1.9]{DPDro:20}) of the mesh cell $K$. When $b\lesssim a\lesssim  b$, we simply write $a\eqsim b$.

\subsection{Polynomial spaces}

For $n\in\N$ and $m\in\{d{-}1,d\}$, $\Pol^n_m$ denotes the linear space of $m$-variate polynomials of total degree at most $n$, with the convention that $\Pol^0_m$ is identified to $\R$, and that $\Pol^{-1}_m\defi\{0\}$.
For any $X\in\FF\cup\KK$, we define $\Pol^n(X)$ as the linear space spanned by the restrictions to $X$ of the polynomials in $\Pol^n_d$. For $X$ of Hausdorff dimension $m\in\{d{-}1,d\}$, $\Pol^n(X)$ is isomorphic to $\Pol^n_m$ (cf.~\cite[Prop.~1.23]{DPDro:20}). We denote by $\pi^n_X$ the $\LL[X]$-orthogonal projector onto $\Pol^n(X)$.
Let now $(\mathcal{U},\mathcal{X})\in\big\{(\pK,\FF_K),(\pKK,\FF),(\Omega,\KK)\big\}$ (for any $K\in\KK$), in such a way that $\bigcup_{X\in\mathcal{X}}\bar{X}=\bar{\mathcal{U}}$. We define the broken polynomial spaces
\[\Pol^n(\mathcal{X})\defi\left\{v_{\mathcal{X}}\in\LL[\mathcal{U}]\mid v_X\defi v_{\mathcal{X}\mid X}\in\Pol^n(X)\;\forall X\in\mathcal{X}\right\},\]
whose respective $\LL[\mathcal{U}]$-orthogonal projectors are denoted by $\pi^n_{\mathcal{X}}$.
Let us turn to vector-valued polynomial spaces. For any $K\in\KK$, we set $\vPol^n(K)\defi\Pol^n(K)^d$, and we let $\vprojK[n]$ denote the corresponding $\vLL[K]$-orthogonal projector. At the global level, we let $\vPol^n(\KK)\defi\Pol^n(\KK)^d$, with projector $\vprojKK[n]$. We further introduce, for any $K\in\KK$,
\begin{equation*}
  \begin{aligned}
    \Gol^n(K)&\defi\Grad\big(\Pol^{n+1}(K)\big),\qquad&\kGol^n(K)&\defi\vPol^{n-1}(K){\times}(\vec{x}-\bar{\vec{x}}_K),\\
    \Col^n(K)&\defi\Curl\big(\vPol^{n+1}(K)\big),\qquad&\kCol^n(K)&\defi\Pol^{n-1}(K)(\vec{x}-\bar{\vec{x}}_K).
  \end{aligned}
\end{equation*}
When $d=2$, (i) the space $\kGol^n(K)$ is to be understood as $\kGol^n(K)=\Pol^{n-1}(K)\,(\vec{x}-\bar{\vec{x}}_K)^\perp$, where $\vec{z}^\perp$ denotes the rotation of angle $-\pi/2$ of $\vec{z}$, and (ii) the space $\Col^n(K)$ is to be understood as $\Col^n(K)=\vRot\big(\Pol^{n+1}(K)\big)$, where $\vRot z\defi(\Grad z)^\perp$.
As a consequence of the homotopy formula~\cite[Thm.~7.1]{Arnol:18}, the (non $\vLL[K]$-orthogonal) direct sum decompositions below hold:
\begin{equation*} 
  \vPol^n(K)=\Gol^n(K)\oplus\kGol^n(K)=\Col^n(K)\oplus\kCol^n(K).
\end{equation*}
Remark that $\kGol^n(K)=\Col^{n-1}(K){\times}(\vec{x}-\bar{\vec{x}}_K)$ when $d=3$.
We will also need, in what follows, the local Raviart--Thomas and (first-kind) N\'ed\'elec spaces~\cite{RaTho:77b,Nedel:80}: for any $K\in\KK$, $n\in\N^\star$,
\begin{equation} \label{eq:RTN}
  \RT^n(K)\defi\Col^{n-1}(K)\oplus\kCol^n(K),\qquad\Ne^n(K)\defi\Gol^{n-1}(K)\oplus\kGol^n(K).
\end{equation}
Both spaces are strictly sandwiched between $\vPol^{n-1}(K)$ and $\vPol^n(K)$.
For $\vec{\mathcal{Q}}\in\{\Gol,\kGol,\Col,\kCol\}$ or $\vec{\mathcal{Q}}\in\{\RT,\Ne\}$, we let $\projFluxK[n]{\vec{\mathcal{Q}}}$ denote the $\vLL[K]$-orthogonal projector onto $\vec{\mathcal{Q}}^n(K)$.
At the global level, the broken versions of the above polynomial spaces are defined by
\[\vec{\mathcal{Q}}^n(\KK)\defi\left\{\vec{v}_\KK\in\vLL\mid\vec{v}_K\defi\vec{v}_{\KK\mid K}\in\vec{\mathcal{Q}}^n(K)\;\forall K\in\KK\right\},\]
with corresponding $\vLL$-orthogonal projectors denoted by $\projFluxKK[n]{\vec{\mathcal{Q}}}$.

\subsection{Finite-dimensional functional spaces}

Let $\ell,k\in\N^\star$ be given. We first define, for any $v_\KK\in\Hone[\KK]$, the jump $\llbracket v_\KK\rrbracket_F$ of $v_\KK$ across $F\in\FF$. If $F\subset\pK^+\cap\pK^-\in\FFi$, $\llbracket v_\KK\rrbracket_F\defi v_{K^+\mid F}-v_{K^-\mid F}=\sum_{K\in\{K^+,K^-\}}\varepsilon_{K,F}v_{K\mid F}$, whereas if $F\subset\pK\cap\partial\Omega\in\FFb$, $\llbracket v_\KK\rrbracket_F\defi v_{K\mid F}$. We then let
\begin{equation*}
  \tilHzone\defi\left\{v_{\KK}\in\Hone[\KK]\mid\projF(\llbracket v_\KK\rrbracket_F)\equiv 0\;\forall F\in\FF\right\}.
\end{equation*}
Remark that $\tilHzone\not\subset\Hzone$, which is emphasized in the notation by the use of a tilde. Nonetheless, the infinite-dimensional space $\tilHzone$ embeds a weak (in the sense of face moments) $\Hzone$-conformity property.

Let us introduce, locally to any mesh cell $K\in\KK$, the following finite-dimensional flux and potential spaces:
\begin{equation} \label{eq:d.flu.loc}
  \dFlu(K)\defi\left\{\vec{\tau}\in\Hdiv[K]\cap\Hcurlzw[K]\mid\Div\vec{\tau}\in\Pol^{\ell-1}(K),\,\vec{\tau}_{\mid\pK}{\cdot}\vN_{\pK}\in\Pol^{k-1}(\FF_K)\right\},
\end{equation}
\begin{equation} \label{eq:d.pot.loc}
  \dPot(K)\defi\left\{v\in\Hone[K]\mid\Div(\tb\Grad v)\in\Pol^{\ell-1}(K),\,(\tb\Grad v)_{\mid\pK}{\cdot}\vN_{\pK}\in\Pol^{k-1}(\FF_K)\right\},
\end{equation}
where $\Hcurlzw[K]\defi\big\{\vec{\tau}\in\vLL[K]\mid\Curl(\tb^{-1}\vec{\tau})\equiv\vec{0}\big\}$.
The latter space is to be replaced by $\vec{H}(\sRot^0_{\tb^{-1}};K)\defi\big\{\vec{\tau}\in\vLL[K]\mid\sRot(\tb^{-1}\vec{\tau})\equiv 0\big\}$ whenever $d=2$, where $\sRot\vec{z}\defi\Div(\vec{z}^\perp)$.
It is an easy matter to realize that $\dFlu(K)=\tb\Grad\big(\dPot(K)\big)$. Also, for all $v\in \dPot(K)$ and $\vec{\tau}\in\dFlu(K)$, there holds
\begin{equation} \label{eq:ibp}
  \big(\tb^{-1}(\tb\Grad v),\vec{\tau}\big)_K=-\big(\Div\vec{\tau},\projK(v)\big)_K+\big(\vec{\tau}_{\mid\pK}{\cdot}\vN_{\pK},\projFFK(v_{\mid\pK})\big)_{\pK},
\end{equation}
where $\Div\vec{\tau}\in\Pol^{\ell-1}(K)$ satisfies
\begin{equation} \label{eq:div}
  (\Div\vec{\tau},\phi)_K=-\big(\vprojGK(\vec{\tau}),\Grad\phi\big)_K+\big(\vec{\tau}_{\mid\pK}{\cdot}\vN_{\pK},\phi_{\mid\pK}\big)_{\pK}\qquad\forall\phi\in\Pol^{\ell-1}(K).
\end{equation}
The above relations, combined with dimension-counting arguments, yield the bijectivity of the local (flux and potential) DoF maps below:
\begin{itemize}
  \item[$\bullet$] $\dFlu(K)\to\Gol^{\ell-2}(K)\times\Pol^{k-1}(\FF_K);\;\vec{\tau}\mapsto\big(\vprojGK(\vec{\tau}),\vec{\tau}_{\mid\pK}{\cdot}\vN_{\pK}\big)$;
  \item[$\bullet$] $\dPot(K)\to\Pol^{\ell-1}(K)\times\Pol^{k-1}(\FF_K);\;v\mapsto\big(\projK(v),\projFFK(v_{\mid\pK})\big)$.
\end{itemize}
\begin{remark}[Boundedness of the inverse DoF maps] \label{re:bnd.inv}
  Let us begin with the flux space $\dFlu(K)$. Starting from~\eqref{eq:div}, and invoking standard inverse and discrete trace inequalities, it comes
  \begin{equation} \label{eq:bnd.div}
    \|\Div\vec{\tau}\|_K\lesssim h_K^{-1}\|\vprojGK(\vec{\tau})\|_K+h_K^{-\frac12}\|\vec{\tau}_{\mid\pK}{\cdot}\vN_{\pK}\|_{\pK}.
  \end{equation}
  Now, starting from~\eqref{eq:ibp}, first remarking that its right-hand side can be rewritten so that
  \[\big(\tb^{-1}(\tb\Grad v),\vec{\tau}\big)_K=-\big(\Div\vec{\tau},\projK(v-\projK[0](v))\big)_K+\big(\vec{\tau}_{\mid\pK}{\cdot}\vN_{\pK},\projFFK(v_{\mid\pK}-\projK[0](v))\big)_{\pK},\]
  then selecting $v\in\dPot(K)$ such that $\tb\Grad v=\vec{\tau}$, and invoking standard Poincar\'e and continuous trace inequalities, we infer
  \begin{equation} \label{eq:bnd.flu}
    \|\tb^{-\frac12}\vec{\tau}\|_K\lesssim h_K\|\Div\vec{\tau}\|_K+h_K^{\frac12}\|\vec{\tau}_{\mid\pK}{\cdot}\vN_{\pK}\|_{\pK}.
  \end{equation}
  Combining~\eqref{eq:bnd.div} and~\eqref{eq:bnd.flu} finally yields, for any $\vec{\tau}\in\dFlu(K)$,
  \begin{equation} \label{eq:bnd.hdiv}
    \|\vec{\tau}\|_K+h_K\|\Div\vec{\tau}\|_K\lesssim\|\vprojGK(\vec{\tau})\|_K+h_K^{\frac12}\|\vec{\tau}_{\mid\pK}{\cdot}\vN_{\pK}\|_{\pK}.
  \end{equation}
  Let us now turn to the potential space $\dPot(K)$. Starting again from~\eqref{eq:ibp}, but selecting this time $\vec{\tau}\in\dFlu(K)$ such that $\vec{\tau}=\tb\Grad v$, we first get that
  \[\|\tb^{\frac12}\Grad v\|_K^2\leq\|\Div(\tb\Grad v)\|_K\|\projK(v)\|_K+\|(\tb\Grad v)_{\mid\pK}{\cdot}\vN_{\pK}\|_{\pK}\|\projFFK(v_{\mid\pK})\|_{\pK}.\]
  By~\cite[Lem.~4.4]{CiELe:19}, $\|\Div(\tb\Grad v)\|_K\lesssim h_K^{-1}\|\tb^{\frac12}\Grad v\|_K$ and $\|(\tb\Grad v)_{\mid\pK}{\cdot}\vN_{\pK}\|_{\pK}\lesssim h_K^{-\frac12}\|\tb^{\frac12}\Grad v\|_K$. Therefore, it comes
  \begin{equation} \label{eq:bnd.pot}
    \|\tb^{\frac12}\Grad v\|_K\lesssim h_K^{-1}\|\projK(v)\|_K+h_K^{-\frac12}\|\projFFK(v_{\mid\pK})\|_{\pK}.
  \end{equation}
  Remarking that $\|v\|_K\leq\|v-\projK[0](v)\|_K+\|\projK[0](v)\|_K$ and $\projK[0](v)=\projK[0](\projK(v))$, and invoking the Poincar\'e inequality, finally yields, for any $v\in\dPot(K)$,
  \begin{equation} \label{eq:bnd.h1}
    h_K^{-1}\|v\|_K+\|\Grad v\|_K\lesssim h_K^{-1}\|\projK(v)\|_K+h_K^{-\frac12}\|\projFFK(v_{\mid\pK})\|_{\pK}.
  \end{equation}
\end{remark}
As it will become clear in what follows, the identity~\eqref{eq:ibp} is pivotal in proving discrete primal/mixed equivalence.

At the global level, now, from $\dFlu(\KK)\defi\left\{\vec{\tau}_\KK\in\Hdiv[\KK]\mid\vec{\tau}_K\in\dFlu(K)\;\forall K\in\KK\right\}$ (broken flux space), we define
\begin{equation} \label{eq:d.flu.glo}
  \gFlu\defi\dFlu(\KK)\cap\Hdiv.
\end{equation}
In turn, from $\dPot(\KK)\defi\left\{v_\KK\in\Hone[\KK]\mid v_K\in \dPot(K)\;\forall K\in\KK\right\}$ (broken potential space), we let
\begin{equation} \label{eq:d.pot.glo}
  \gPot\defi\dPot(\KK)\cap\tilHzone.
\end{equation}
Next, we introduce the following skeletal trace spaces, whose definitions mimic~\eqref{eq:c.Hmhalf} and~\eqref{eq:c.Hzphalf}:
\begin{align}
  \dHmhalf&\defi\left\{\tau_{\pKK}\defi(\tau_{\pK})_{K\in\KK}\in\prod_{K\in\KK}\Pol^{k-1}(\FF_K)
  \left |\!
  \begin{array}{l}
    \exists\,\vec{\tau}\in\gFlu\text{ s.t.}
    \\
    \tau_{\pK} = \vec{\tau}_{\mid\pK}{\cdot}\vN_{\pK} \;\forall K \in\KK
  \end{array}
  \right .
  \!\!\!\right\},\label{eq:d.Hmhalf}
  \\
  \dHzphalf&\defi\left\{v_{\pKK}\defi(v_{\pK})_{K\in\KK}\in\prod_{K\in\KK}\Pol^{k-1}(\FF_K)
  \left |\!
  \begin{array}{l}
    \exists\,\til{v}_{\KK}\in\gPot\text{ s.t.}
    \\
    v_{\pK} = \projFFK(\til{v}_{K\mid\pK}) \;\forall K \in\KK
  \end{array}
  \right .
  \!\!\!\right\}.\label{eq:d.Hzphalf}
\end{align}
The space $\dHmhalf$ (resp.~$\dHzphalf$) is isomorphic to $\Pol^{k-1}(\FF)$ (resp.~$\Pol^{k-1}(\FFi)$).
Let $\tau_{\pKK}\in\dHmhalf$ and $v_{\pKK}\in\dHzphalf$. On the one hand, since $\vec{\tau}\in\gFlu\subset\Hdiv$ and $\til{v}_\KK\in\gPot\subset\tilHzone$ (so that $\projF(\llbracket v_\KK\rrbracket_F)\equiv 0$ for all $F\in\FF$), there holds
\begin{align*}
  \langle\tau_{\pKK},v_{\pKK}\rangle_{\pKK}&=\sum_{K\in\KK}\sum_{F\in\FF_K}\varepsilon_{K,F}\big(\vec{\tau}_{\mid F}{\cdot}\vN_F,\projF(\til{v}_{K\mid F})\big)_F\\
  &=\sum_{F\in\FF}\big(\vec{\tau}_{\mid F}{\cdot}\vN_F,\projF(\llbracket\til{v}_{\KK}\rrbracket_F)\big)_F=0.
\end{align*}                                                                                                   
On the other hand, since $\vec{\tau}\in\gFlu\subset\dFlu(\KK)$ (so that $\vec{\tau}_{\mid\pK}{\cdot}\vN_{\pK}\in\Pol^{k-1}(\FF_K)$ for all $K\in\KK$), one may remove the projectors $\projF$ and infer that
\begin{equation} \label{eq:d.fund}
  0=\langle \tau_{\pKK},v_{\pKK}\rangle_{\pKK}=\sum_{K\in\KK}\big(\vec{\tau}_{\mid\pK}{\cdot}\vN_{\pK},\til{v}_{K\mid\pK}\big)_{\pK}=\sum_{K\in\KK}\big((\Div\vec{\tau},\til{v}_K)_K+(\vec{\tau},\Grad\til{v}_K)_K\big).
\end{equation}
The relation~\eqref{eq:d.fund} is a discrete counterpart of~\eqref{eq:c.fund}. As a by-product, the space characterizations below, which are discrete versions of~\eqref{eq:c.Hdiv} and~\eqref{eq:c.Hzone}, hold true:
\begin{align}
  \gFlu&=\big\{\vec{\tau}_\KK\in\dFlu(\KK)\mid\langle\vec{\tau}_{\KK\mid\pKK}{\cdot}\vN_{\pKK},v_{\pKK}\rangle_{\pKK}=0\;\;\forall v_{\pKK}\in\dHzphalf\big\},\label{eq:d.Hdiv}\\
  \gPot&=\big\{v_\KK\in \dPot(\KK)\mid\langle\tau_{\pKK},v_{\KK\mid\pKK}\rangle_{\pKK}=0\;\;\forall\tau_{\pKK}\in\dHmhalf\big\}.\label{eq:d.Hzone}
\end{align}

\subsection{Discrete problems}

We now have at our disposal all the necessary ingredients to write down finite-dimensional counterparts of Problems~\ref{pb:c.primal} to~\ref{pb:c.hmixed}. The translation from the infinite-dimensional setting to the finite-dimensional one is performed based on the Rosetta stone displayed in Table~\ref{ta:equiv}.
\begin{table}[h!]
  \centering
  \begin{tabular}{c|c}
    Infinite-dimensional space & Finite-dimensional twin \\
    \hline\hline
    $\LL$ & $\Pol^{\ell-1}(\KK)$\\
    \hline
    $\Hdiv[\KK]$ & $\dFlu(\KK)$ (cf.~\eqref{eq:d.flu.loc} for local definition)\\
    \hline
    $\Hdiv$ & $\gFlu$ (cf.~\eqref{eq:d.flu.glo})\\
    \hline
    $\Hmhalf$ (cf.~\eqref{eq:c.Hmhalf}) & $\dHmhalf\cong\Pol^{k-1}(\FF)$ (cf.~\eqref{eq:d.Hmhalf})\\
    \hline
    $\Hone[\KK]$ & $\dPot(\KK)$ (cf.~\eqref{eq:d.pot.loc} for local definition)\\
    \hline
    $\Hzone$ & $\gPot$ (cf.~\eqref{eq:d.pot.glo})\\
    \hline
    $\Hzphalf$ (cf.~\eqref{eq:c.Hzphalf}) & $\dHzphalf\cong\Pol^{k-1}(\FFi)$ (cf.~\eqref{eq:d.Hzphalf})\\
    \hline
  \end{tabular}
  \caption{The Rosetta stone.\label{ta:equiv}}
\end{table}

\noindent
The new problems are obtained (i) replacing the infinite-dimensional spaces by their finite-dimensional twins (cf.~Table~\ref{ta:equiv}) and (ii) replacing, within primal formulations, the load $f$ by its cell-wise polynomial projection $\projKK(f)$.
\begin{problem}[Primal] \label{pb:d.primal}
  Find $\til{u}_{\KK}\in\gPot$ s.t.
  \begin{equation} \label{eq:d.primal}
    (\tb\Grad_{\KK}\til{u}_{\KK},\Grad_{\KK}\til{v}_{\KK})_{\Omega}=(f,\projKK(\til{v}_{\KK}))_{\Omega}\qquad\forall\til{v}_{\KK}\in\gPot.
  \end{equation}
\end{problem}
\begin{problem}[Mixed] \label{pb:d.mixed}
  Find $(\vec{\sigma},u)\in\gFlu\times\Pol^{\ell-1}(\KK)$ s.t.
  \begin{subequations} \label{eq:d.mixed}
    \begin{alignat}{2}
      (\tb^{-1}\vec{\sigma},\vec{\tau})_{\Omega}+(u,\Div\vec{\tau})_{\Omega} &= 0 &\qquad&\forall\vec{\tau}\in\gFlu, \label{eq:d.mixed.a}
      \\
      -(\Div\vec{\sigma},v)_{\Omega} &= (f,v)_{\Omega} &\qquad&\forall v\in\Pol^{\ell-1}(\KK). \label{eq:d.mixed.b}
    \end{alignat}
  \end{subequations}
\end{problem}
\begin{problem}[Hybridized primal] \label{pb:d.hprimal}
  Find $(u_\KK,\sigma_{\pKK})\in\dPot(\KK)\times\dHmhalf$ s.t.
  \begin{subequations} \label{eq:d.hprimal}
    \begin{alignat}{2}
      (\tb\Grad_\KK u_\KK,\Grad_\KK v_\KK)_{\Omega}-\langle\sigma_{\pKK},v_{\KK\mid\pKK}\rangle_{\pKK} &= (f,\projKK(v_\KK))_{\Omega} &\qquad&\forall v_\KK\in\dPot(\KK), \label{eq:d.hprimal.a}
      \\
      \langle\tau_{\pKK},u_{\KK\mid\pKK}\rangle_{\pKK} &= 0 &\qquad&\forall\tau_{\pKK}\in\dHmhalf. \label{eq:d.hprimal.b}
    \end{alignat}
  \end{subequations}
\end{problem}
\begin{problem}[Hybridized mixed] \label{pb:d.hmixed}
  Find $(\vec{\sigma}_\KK,u_\KK,u_{\pKK})\in\dFlu(\KK)\times\Pol^{\ell-1}(\KK)\times\dHzphalf$ s.t.
  \begin{subequations} \label{eq:d.hmixed}
    \begin{alignat}{2}
      (\tb^{-1}\vec{\sigma}_\KK,\vec{\tau}_\KK)_{\Omega}+(u_\KK,\Div_\KK\vec{\tau}_\KK)_{\Omega}-\langle\vec{\tau}_{\KK\mid\pKK}{\cdot}\vN_{\pKK}, u_{\pKK}\rangle_{\pKK} &= 0 &\quad&\forall\vec{\tau}_\KK\in\dFlu(\KK), \label{eq:d.hmixed.a}
      \\
      -(\Div_\KK\vec{\sigma}_\KK,v_\KK)_{\Omega} &= (f,v_\KK)_{\Omega} &\quad&\forall v_\KK\in\Pol^{\ell-1}(\KK), \label{eq:d.hmixed.b}
      \\
      \langle\vec{\sigma}_{\KK\mid\pKK}{\cdot}\vN_{\pKK}, v_{\pKK}\rangle_{\pKK} &= 0&\quad&\forall v_{\pKK}\in\dHzphalf. \label{eq:d.hmixed.c}
    \end{alignat}
  \end{subequations}
\end{problem}
\noindent
Remark that Problems~\ref{pb:d.mixed} and~\ref{pb:d.hprimal} are conforming approximations of Problems~\ref{pb:c.mixed} and~\ref{pb:c.hprimal}, respectively. On the contrary, Problems~\ref{pb:d.primal} and~\ref{pb:d.hmixed} are nonconforming approximations of Problems~\ref{pb:c.primal} and~\ref{pb:c.hmixed}, respectively.
\begin{remark}[Well-posedness] \label{re:fdc.wp}
  All four discrete problems above are (uniformly) well-posed. The well-posedness of the primal Problem~\ref{pb:d.primal} is standard; we refer, e.g., to~\cite[Lem.~3.1]{Lemai:21}. For the hybridized primal Problem~\ref{pb:d.hprimal}, we refer the reader to~\cite[Thm.~2]{RaTho:77a}. One only needs to revisit, therein, the proof of the LBB condition for the coupling (Eq.~(6.29)), which can be done leveraging the stability estimate~\eqref{eq:bnd.h1}. For the mixed Problem~\ref{pb:d.mixed}, in turn, well-posedness can be established following the lines of~\cite{RaTho:77b}. More particularly, one needs to revisit, therein, the proofs of the local result of Lem.~4 and of the global result of Lem.~5 (which, in turn, is based on~\cite[Eq.~(6.29)]{RaTho:77a}). This can be achieved leveraging the stability estimates~\eqref{eq:bnd.flu} and~\eqref{eq:bnd.h1}, respectively. Finally, the well-posedness of the hybridized mixed Problem~\ref{pb:d.hmixed} hinges on the well-posedness of the mixed problem, and on some LBB condition for the coupling, which can be inferred based on the stability estimate~\eqref{eq:bnd.hdiv}.
\end{remark}
\noindent
Proceeding exactly as in the infinite-dimensional setting (cf.~Propositions~\ref{pr:c.eqprimal} and~\ref{pr:c.eqmixed}), but invoking this time the space characterizations~\eqref{eq:d.Hzone} and~\eqref{eq:d.Hdiv}, equivalence can be established between the two primal formulations (Problems~\ref{pb:d.primal} and~\ref{pb:d.hprimal}) on the one hand, and between the two mixed formulations (Problems~\ref{pb:d.mixed} and~\ref{pb:d.hmixed}) on the other hand.

\subsection{Primal/mixed equivalence}

Let us now investigate in more depth the connections between primal and mixed formulations. Below, we establish finite-dimensional counterparts of Lemmas~\ref{le:c.ptom} and~\ref{le:c.mtop}, proving that a discrete equivalent of the (infinite-dimensional) diagram of Figure~\ref{fig:c.diag} is valid.
\begin{lemma}[From primal to mixed] \label{le:d.ptom}
  Assume that the couple $(u_\KK,\sigma_{\pKK})\in\dPot(\KK)\times\dHmhalf$ solves Problem~\ref{pb:d.hprimal}. Then, there holds $\tb\Grad_{\KK}u_\KK\in\gFlu$, and the couple $(\vec{\sigma}\defi\tb\Grad_{\KK}u_\KK,u\defi\projKK(u_{\KK}))\in\gFlu\times\Pol^{\ell-1}(\KK)$ solves Problem~\ref{pb:d.mixed}.
\end{lemma}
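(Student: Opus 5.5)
We mirror the two-step proof of Lemma~\ref{le:c.ptom}, substituting the identity~\eqref{eq:ibp} for continuous integration by parts and the characterization~\eqref{eq:d.Hzone} for~\eqref{eq:c.Hzone}. Set $\vec{\sigma}_\KK\defi\tb\Grad_\KK u_\KK$. Since $\dFlu(K)=\tb\Grad\big(\dPot(K)\big)$, we have $\vec{\sigma}_K\in\dFlu(K)$ for every $K\in\KK$, that is, $\vec{\sigma}_\KK\in\dFlu(\KK)$.

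\textbf{Step (a): local equations and interface continuity.} Fix $K\in\KK$ and test~\eqref{eq:d.hprimal.a} with $v_\KK$ supported on $K$, where $v_K\in\dPot(K)$ is arbitrary. Apply~\eqref{eq:ibp} with $\vec{\tau}\defi\vec{\sigma}_K$ and $v\defi v_K$; this is legitimate because $\vec{\sigma}_K\in\dFlu(K)$ and $\tb^{-1}(\tb\Grad v_K)=\Grad v_K$. The left-hand side becomes
\[-\big(\Div\vec{\sigma}_K,\projK(v_K)\big)_K+\big(\vec{\sigma}_{K\mid\pK}{\cdot}\vN_{\pK}-\sigma_{\pK},\projFFK(v_{K\mid\pK})\big)_{\pK}.\]
Here we used $\sigma_{\pK}\in\Pol^{k-1}(\FF_K)$, which lets us insert the projector $\projFFK$ in the multiplier term. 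The right-hand side is $(f,\projK(v_K))_K=(\projK(f),\projK(v_K))_K$. The DoF map $v\mapsto(\projK(v),\projFFK(v_{\mid\pK}))$ is surjective onto $\Pol^{\ell-1}(K)\times\Pol^{k-1}(\FF_K)$, so the two components can be chosen independently. Both $\Div\vec{\sigma}_K$ and $\vec{\sigma}_{K\mid\pK}{\cdot}\vN_{\pK}-\sigma_{\pK}$ lie in these polynomial spaces, hence
\[-\Div\vec{\sigma}_K=\projK(f)\quad\text{and}\quad\vec{\sigma}_{K\mid\pK}{\cdot}\vN_{\pK}=\sigma_{\pK}.\]
By definition~\eqref{eq:d.Hmhalf}, $\sigma_{\pKK}$ is the normal trace collection of some $\vec{\tau}\in\gFlu\subset\Hdiv$. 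Therefore the normal traces of $\vec{\sigma}_\KK$ are single-valued across interfaces, so $\vec{\sigma}_\KK\in\Hdiv$ and hence $\vec{\sigma}_\KK\in\gFlu$. Equation~\eqref{eq:d.mixed.b} then follows: for every $v\in\Pol^{\ell-1}(\KK)$ we have $-(\Div\vec{\sigma},v)_\Omega=(\projKK(f),v)_\Omega=(f,v)_\Omega$.

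\textbf{Step (b): the constitutive equation~\eqref{eq:d.mixed.a}.} Take $\vec{\tau}\in\gFlu$ and set $u\defi\projKK(u_\KK)$. Apply~\eqref{eq:ibp} cellwise with $v\defi u_K$:
\[(\tb^{-1}\vec{\sigma},\vec{\tau})_\Omega+(u,\Div\vec{\tau})_\Omega=\sum_{K\in\KK}\big(\vec{\tau}_{\mid\pK}{\cdot}\vN_{\pK},\projFFK(u_{K\mid\pK})\big)_{\pK}=\sum_{K\in\KK}\big(\vec{\tau}_{\mid\pK}{\cdot}\vN_{\pK},u_{K\mid\pK}\big)_{\pK}.\]
The cell term $(\Div\vec{\tau},\projK(u_K))_K$ produced by~\eqref{eq:ibp} cancels against $(u,\Div\vec{\tau})_K$, and the projector can be dropped because $\vec{\tau}_{\mid\pK}{\cdot}\vN_{\pK}\in\Pol^{k-1}(\FF_K)$. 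The last sum equals $\langle\tau_{\pKK},u_{\KK\mid\pKK}\rangle_{\pKK}$ with $\tau_{\pKK}\defi\vec{\tau}_{\mid\pKK}{\cdot}\vN_{\pKK}\in\dHmhalf$, and this vanishes by~\eqref{eq:d.hprimal.b}.

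The only delicate point is step (a). The test functions must be restricted to the discrete space $\dPot(K)$, and the identification of both $\Div\vec{\sigma}_K$ and the normal trace relies on the bijectivity of the potential DoF map together with the polynomial nature of all quantities involved. This is what replaces the localization and density arguments used in the continuous proof.
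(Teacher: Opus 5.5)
Your proof is correct and follows essentially the same two-step argument as the paper. In step (a), you localize \eqref{eq:d.hprimal.a}, apply \eqref{eq:ibp}, use surjectivity of the potential DoF map, and then use that $\sigma_{\pKK}\in\dHmhalf$. In step (b), you test \eqref{eq:d.hprimal.b} with the normal traces of $\vec{\tau}\in\gFlu$. The only cosmetic difference is there: you invoke \eqref{eq:ibp} and drop the face projector, whereas the paper drops the cell projector using $\Div\vec{\tau}\in\Pol^{\ell-1}(K)$ and integrates by parts directly.
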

\begin{proof}
  The result falls in two steps.\\
  (a) First, let us rewrite Eq.~\eqref{eq:d.hprimal.a} cell by cell: inside $K\in\KK$, letting $\vec{\sigma}_K\defi\tb\Grad u_K$, and using the fact that $\sigma_{\pK}\in\Pol^{k-1}(\FF_K)$, it comes, for all $v_K\in\dPot(K)$,
  \[\big(\tb^{-1}(\tb\Grad v_K),\vec{\sigma}_K\big)_K-\big(\sigma_{\pK},\projFFK(v_{K\mid\pK})\big)_{\pK}=\big(f,\projK(v_K)\big)_K.\]
  Since $\vec{\sigma}_K\in\dFlu(K)$ (recall that $u_K\in\dPot(K)$), an application of formula~\eqref{eq:ibp} to the term $\big(\tb^{-1}(\tb\Grad v_K),\vec{\sigma}_K\big)_K$ then yields
  \[-\big(\Div\vec{\sigma}_K,\projK(v_K)\big)_K+\big(\vec{\sigma}_{K\mid\pK}{\cdot}\vN_{\pK}-\sigma_{\pK},\projFFK(v_{K\mid\pK})\big)_{\pK}=\big(\projK(f_{\mid K}),\projK(v_K)\big)_K.\]
  The above relation is valid for all $v_K\in\dPot(K)$. Consequently, recalling that the potential DoF map $v\mapsto\big(\projK(v),\projFFK(v_{\mid\pK})\big)$ is bijective from $\dPot(K)$ to $\Pol^{\ell-1}(K)\times\Pol^{k-1}(\FF_K)$, and that there holds $\vec{\sigma}_K\in\dFlu(K)$ (so that $\Div\vec{\sigma}_K\in\Pol^{\ell-1}(K)$ and $\vec{\sigma}_{K\mid\pK}{\cdot}\vN_{\pK}\in\Pol^{k-1}(\FF_K)$) and $\sigma_{\pK}\in\Pol^{k-1}(\FF_K)$, we both infer that $-\Div\vec{\sigma}_K=\projK(f_{\mid K})$ a.e.~in $K$, and that $\vec{\sigma}_{K\mid\pK}{\cdot}\vN_{\pK}=\sigma_{\pK}$ a.e.~on $\pK$. Then, given that $\sigma_{\pKK}\in\dHmhalf$, it follows that $\vec{\sigma}_\KK(=\tb\Grad_\KK u_\KK)\in\gFlu\subset\Hdiv$, and that $-(\Div\vec{\sigma}_\KK,v)_{\Omega}=(f,v)_{\Omega}$ for all $v\in\Pol^{\ell-1}(\KK)$.\\
  (b) Second, we prove that $(\tb^{-1}\vec{\sigma}_\KK,\vec{\tau})_{\Omega}+(\projKK(u_\KK),\Div\vec{\tau})_{\Omega}=0$ for all $\vec{\tau}\in\gFlu$. The conclusion then follows setting $(\vec{\sigma}\defi\vec{\sigma}_\KK,u\defi\projKK(u_\KK))\in\gFlu\times\Pol^{\ell-1}(\KK)$. For any $\vec{\tau}\in\gFlu$, letting $\tau_{\pKK}\in\prod_{K\in\KK}\Pol^{k-1}(\FF_K)$ be such that $\tau_{\pKK}\defi\vec{\tau}_{\mid\pKK}{\cdot}\vN_{\pKK}$, there holds $\tau_{\pKK}\in\dHmhalf$. Then, recalling that $\vec{\sigma}_K=\tb\Grad u_K$ for all $K\in\KK$, and invoking~\eqref{eq:d.hprimal.b}, it comes
  \begin{align*}
    (\tb^{-1}\vec{\sigma}_\KK,\vec{\tau})_{\Omega}+(\projKK(u_\KK),\Div\vec{\tau})_{\Omega}&=\sum_{K\in\KK}\big((\Grad u_K,\vec{\tau})_K+(u_K,\Div\vec{\tau})_K\big)\\
    &=\sum_{K\in\KK}(\tau_{\pK},u_{K\mid\pK})_{\pK}=\langle\tau_{\pKK},u_{\KK\mid\pKK}\rangle_{\pKK}=0.
  \end{align*}
\end{proof}
\begin{lemma}[From mixed to primal] \label{le:d.mtop}
  Assume that the triple $(\vec{\sigma}_\KK,u_\KK,u_{\pKK})\in\dFlu(\KK)\times\Pol^{\ell-1}(\KK)\times\dHzphalf$ solves Problem~\ref{pb:d.hmixed}. Then, letting $\til{u}_{\KK}$ be the unique element of $\gPot$ such that, for all $K\in\KK$, $\projK(\til{u}_K)=u_K$ and $\projFFK(\til{u}_{K\mid\pK})=u_{\pK}$, $\til{u}_\KK\in\gPot$ solves Problem~\ref{pb:d.primal}.
\end{lemma}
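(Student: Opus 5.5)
The plan mirrors the proof of Lemma~\ref{le:c.mtop}, with the identity~\eqref{eq:ibp} replacing integration by parts and~\eqref{eq:d.fund} replacing~\eqref{eq:c.fund}. It has three steps: (a) $\til{u}_\KK$ is well defined and lies in $\gPot$; (b) $\vec{\sigma}_K=\tb\Grad\til{u}_K$ in each cell; (c) $\til{u}_\KK$ satisfies~\eqref{eq:d.primal}.

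\emph{Step (a): well-posedness of the definition of $\til{u}_\KK$.} This is where I expect the main (if mild) obstacle. For each $K\in\KK$, bijectivity of the potential DoF map $\dPot(K)\to\Pol^{\ell-1}(K)\times\Pol^{k-1}(\FF_K)$ gives a unique $\til{u}_K\in\dPot(K)$ with $\projK(\til{u}_K)=u_K$ and $\projFFK(\til{u}_{K\mid\pK})=u_{\pK}$. The collection $\til{u}_\KK$ lies in $\dPot(\KK)$, and uniqueness is automatic; what remains is to show $\til{u}_\KK\in\tilHzone$.
\begin{itemize}
\item Since $u_{\pKK}\in\dHzphalf$, the definition~\eqref{eq:d.Hzphalf} provides some $\til{v}_\KK\in\gPot$ with $\projFFK(\til{v}_{K\mid\pK})=u_{\pK}$ for all $K$.
\item For every $F\in\FF$, the quantity $\projF(\llbracket w_\KK\rrbracket_F)$ depends only on the face projections $\projF(w_{K\mid F})$ of the adjacent cells.
\item Hence $\projF(\llbracket\til{u}_\KK\rrbracket_F)=\projF(\llbracket\til{v}_\KK\rrbracket_F)=0$, so $\til{u}_\KK\in\gPot$.
\end{itemize}

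\emph{Step (b): discrete constitutive relation.} Fix $K\in\KK$ and test~\eqref{eq:d.hmixed.a} with $\vec{\tau}_\KK$ supported in $K$, where $\vec{\tau}_K\in\dFlu(K)$. This gives
\[(\tb^{-1}\vec{\sigma}_K,\vec{\tau}_K)_K+(u_K,\Div\vec{\tau}_K)_K-(\vec{\tau}_{K\mid\pK}{\cdot}\vN_{\pK},u_{\pK})_{\pK}=0.\]
Next apply~\eqref{eq:ibp} with $v=\til{u}_K$, using the prescribed DoFs $\projK(\til{u}_K)=u_K$ and $\projFFK(\til{u}_{K\mid\pK})=u_{\pK}$:
\[(\Grad\til{u}_K,\vec{\tau}_K)_K=-(\Div\vec{\tau}_K,u_K)_K+(\vec{\tau}_{K\mid\pK}{\cdot}\vN_{\pK},u_{\pK})_{\pK}.\]
Subtracting the two identities yields $\big(\tb^{-1}(\vec{\sigma}_K-\tb\Grad\til{u}_K),\vec{\tau}_K\big)_K=0$ for all $\vec{\tau}_K\in\dFlu(K)$. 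Because $\dFlu(K)=\tb\Grad(\dPot(K))$, the difference $\vec{\sigma}_K-\tb\Grad\til{u}_K$ itself belongs to $\dFlu(K)$. Taking $\vec{\tau}_K$ equal to it and using the uniform ellipticity of $\tb^{-1}$ gives $\vec{\sigma}_K=\tb\Grad\til{u}_K$.

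\emph{Step (c): the primal equation.}
\begin{itemize}
\item Equation~\eqref{eq:d.hmixed.c}, combined with the characterization~\eqref{eq:d.Hdiv}, gives $\vec{\sigma}_\KK\in\gFlu$.
\item Let $\til{v}_\KK\in\gPot$. Since $\vec{\sigma}_\KK\in\gFlu$ and $\til{v}_\KK\in\gPot$, relation~\eqref{eq:d.fund} applies and gives
\[(\tb\Grad_\KK\til{u}_\KK,\Grad_\KK\til{v}_\KK)_\Omega=\sum_{K\in\KK}(\vec{\sigma}_K,\Grad\til{v}_K)_K=-\sum_{K\in\KK}(\Div\vec{\sigma}_K,\til{v}_K)_K.\]
\item Because $\Div\vec{\sigma}_K\in\Pol^{\ell-1}(K)$, we may replace $\til{v}_K$ by $\projK(\til{v}_K)$ in the last sum.
\item Finally, test~\eqref{eq:d.hmixed.b} with $v_\KK=\projKK(\til{v}_\KK)\in\Pol^{\ell-1}(\KK)$ to obtain $(f,\projKK(\til{v}_\KK))_\Omega$.
\end{itemize}
This is exactly~\eqref{eq:d.primal}, which concludes the proof.
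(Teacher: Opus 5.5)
Your proof is correct and follows the paper's argument: step (b) is the paper's cellwise application of \eqref{eq:ibp}, and step (c) cancels the same skeletal term before testing \eqref{eq:d.hmixed.b} with $\projKK(\til{v}_\KK)$. The differences are minor. You make explicit the well-definedness of $\til{u}_\KK\in\gPot$, which the paper takes for granted. You also remove the boundary term by first deducing $\vec{\sigma}_\KK\in\gFlu$ from \eqref{eq:d.Hdiv} and then invoking \eqref{eq:d.fund}, mirroring Lemma~\ref{le:c.mtop}. The paper instead plugs $v_{\pKK}\defi(\projFFK(\til{v}_{K\mid\pK}))_{K\in\KK}\in\dHzphalf$ directly into \eqref{eq:d.hmixed.c}.
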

\begin{proof}
  The result also falls in two steps.\\
  (a) First, let us rewrite Eq.~\eqref{eq:d.hmixed.a} cell by cell: inside $K\in\KK$, replacing $u_K$ and $u_{\pK}$ by their expressions in terms of $\til{u}_K\in\dPot(K)$, we infer that, for all $\vec{\tau}_K\in\dFlu(K)$,
  \[\big(\tb^{-1}\vec{\sigma}_K,\vec{\tau}_K\big)_K+\big(\projK(\til{u}_K),\Div\vec{\tau}_K\big)_K-\big(\vec{\tau}_{K\mid\pK}{\cdot}\vN_{\pK},\projFFK(\til{u}_{K\mid\pK})\big)_{\pK} = 0.\]
  A simple application of~\eqref{eq:ibp} then implies that $\vec{\sigma}_K=\tb\Grad\til{u}_K(\in\dFlu(K))$ a.e.~in $K$.\\
  (b) Second, we prove that $(\tb\Grad_\KK\til{u}_\KK,\Grad_{\KK}\til{v}_{\KK})_{\Omega}=(f,\projKK(\til{v}_\KK))_{\Omega}$ for all $\til{v}_\KK\in\gPot$, i.e., that $\til{u}_\KK\in\gPot$ solves Problem~\ref{pb:d.primal}, which shall conclude the proof. For any $\til{v}_{\KK}\in\gPot$, letting $(v_{\KK},v_{\pKK})\in\Pol^{\ell-1}(\KK)\times\prod_{K\in\KK}\Pol^{k-1}(\FF_K)$ be such that $v_K\defi\projK(\til{v}_K)$ and $v_{\pK}\defi\projFFK(\til{v}_{K\mid\pK})$ for all $K\in\KK$, there holds $(v_{\KK},v_{\pKK})\in\Pol^{\ell-1}(\KK)\times\dHzphalf$. Then, recalling that $\vec{\sigma}_K=\tb\Grad\til{u}_K$ for all $K\in\KK$, and invoking again~\eqref{eq:ibp} together with~\eqref{eq:d.hmixed.b} and~\eqref{eq:d.hmixed.c}, we infer
  \begin{align*}
    (\tb\Grad_\KK\til{u}_\KK,\Grad_{\KK}\til{v}_{\KK})_{\Omega}&=\sum_{K\in\KK}\big(\tb^{-1}(\tb\Grad\til{v}_K),\vec{\sigma}_K\big)_K\\
    &=\sum_{K\in\KK}\left(-(\Div\vec{\sigma}_K,v_K)_K+(\vec{\sigma}_{K\mid\pK}{\cdot}\vN_{\pK},v_{\pK})_{\pK}\right)\\
    &=-(\Div_\KK\vec{\sigma}_\KK,v_\KK)_{\Omega}+\langle\vec{\sigma}_{\KK\mid\pKK}{\cdot}\vN_{\pKK}, v_{\pKK}\rangle_{\pKK}\\
    &=(f,v_\KK)_{\Omega}=(f,\projKK(\til{v}_\KK))_{\Omega}.
  \end{align*}
\end{proof}

In this section, we have thus been able to exhibit a finite-dimensional construction for which primal/mixed equivalence does hold true. This construction, because $\dFlu(\KK)=\tb\Grad_{\KK}\big(\dPot(\KK)\big)$, exactly preserves the structure of the constitutive relation from the model at hand. It also preserves (full) $\Hdiv$-conformity for the flux and weak (in the sense of face moments) $\Hzone$-conformity for the potential. However, because the spaces $\dPot(K)$ are defined implicitly (via the solutions to PDEs), they are not analytically known in general (and, even if they are, they may contain non-polynomial functions), which essentially prevents the practical implementation of the corresponding one-level methods.
Nevertheless, these spaces are at the very core of a number of oversampling-free multiscale (a.k.a.~two-level) approaches. In the primal context, first, one can cite the MsFE \`a la Crouzeix--Raviart methods of~\cite{LBLLo:13} (case $\ell=0$ and $k=1$, not considered herein) and~\cite{LBLLo:14} (case $\ell=k=1$, therein studied in a perforated setting), and their generalization to arbitrary polynomial degrees of~\cite{CiELe:19} (under the name of MsHHO method). A small, yet important (as it might condition primal/mixed equivalence) difference between~\cite{LBLLo:13,LBLLo:14} and~\cite{CiELe:19} lies in the discretization of the right-hand side of the problem (which is $(f,\projKK(\til{v}_{\KK}))_{\Omega}$ for MsHHO, in place of $(f,\til{v}_{\KK})_{\Omega}$ for MsFE \`a la Crouzeix--Raviart). In the hybridized primal context, in turn, one can cite the MHM method of~\cite{HPVal:13,AHPVa:13,PaVaV:17}, whose first-level equivalence with the MsHHO method has already been established in~\cite{CFELV:22} (therein, the corresponding version of the MHM method is referred to as ``fully explicit''). In the mixed context, now, our construction generalizes to polytopal cells and arbitrary polynomial degrees the method of~\cite{ChenH:03}. Finally, in the hybridized mixed setting, the first-level method from Problem~\ref{pb:d.hmixed} does not seem to have ever appeared in the literature (note that~\cite{ELShi:15} is actually based on local Dirichlet problems).

\section{Projection methods} \label{se:proj}

We aim to explore here situations for which the construction from Section~\ref{se:fdc} would be directly usable as a one-level method. We shall assume, throughout this section, that the (symmetric) mobility tensor field $\tb$ is cell-wise constant over $\KK$, i.e.,
\begin{equation} \label{eq:diff.cwc}
  \tb_K\defi\tb_{\mid K}\in\R^{d\times d}_{\rm sym}\qquad\forall K\in\KK.
\end{equation}

\subsection{A silver bullet}

Let us temporarily assume (in this subsection only) that the partition $\KK$ of $\Omega$ is simplicial. 
Recalling definition~\eqref{eq:d.pot.loc}, and given assumption~\eqref{eq:diff.cwc}, we begin by remarking that, for any simplex $K\in\KK$, $\dPot[0,1](K)=\Pol^1(K)$ (of dimension $d{+}1$). At the global level, it holds $\gPot[0,1]=\Pol^1(\KK)\cap\tilHzone[1]$, i.e., $\gPot[0,1]=\CR^1_0(\Omega)$, where $\CR^1(\Omega)$ is the so-called Crouzeix--Raviart space, originally introduced in~\cite{CroRa:73} (in its vector-valued fashion) in combination with $\Pol^0(\KK)$ as a stable pair for the Stokes problem. Because we exclude the case $\ell=0$ from our study, let us henceforth concentrate on the space $\dPot[1,1](K)$. Letting $\psi_K(\vec{x})\defi\frac{1}{2}\tb_K^{-1}(\vec{x}-\bar{\vec{x}}_K){\cdot}(\vec{x}-\bar{\vec{x}}_K)$, one can easily verify that
\begin{equation} \label{eq:d.pot.loc:sim.oo}
  \dPot[1,1](K)=\Pol^1(K)\oplus\Pol^0(K)\psi_K
\end{equation}
(of dimension $d{+}2$). Notice, indeed, that $\Div(\tb_K\Grad\psi_K)\equiv d\in\Pol^0(K)$ and that, for any $F\in\FF_K$,
\[[\tb_K\Grad\psi_K]_{\mid F}(\vec{x}){\cdot}\vN_{K,F}=(\vec{x}-\bar{\vec{x}}_K){\cdot}\vN_{K,F}=\mathrm{d}_{K,F}\qquad\text{for all $\vec{x}\in F$},\]
where $\mathrm{d}_{K,F}\in\Pol^0(F)$ denotes the (orthogonal) distance between $\bar{\vec{x}}_K$ and the affine hyperplane containing $F$. Observe that $\Pol^1(K)\subset\dPot[1,1](K)\subset\Pol^2(K)$. At the global level, $\gPot[1,1]=\dPot[1,1](\KK)\cap\tilHzone[1]$ can be viewed as a bubble-enriched (homogeneous) Crouzeix--Raviart space.
Note that the function $\psi_K$ is not, strictly speaking, a bubble, but the space $\dPot[1,1](K)$ may equivalently be written as the 
direct sum (i) of $\spa\{(\psi_K^{\partial,F})_{F\in\FF_K}\}(=\Pol^1(K))$ such that, for all $F\in\FF_K$, $\Div(\tb_K\Grad\psi_K^{\partial,F})\equiv 0$ and $\pi_{F'}^0(\psi_{K\mid F'}^{\partial,F})=\delta_{FF'}$ for all $F'\in\FF_K$, and (ii) of $\spa\{\psi_K^\circ\}$ such that $\Div(\tb_K\Grad\psi_K^\circ)\equiv 1$ and $\pi_{F'}^0(\psi_{K\mid F'}^\circ)=0$ for all $F'\in\FF_K$.
The (quadratic) function $\psi_K^\circ$ is precisely what might be called a (weak) bubble.
Now, following the construction from Section~\ref{se:fdc}, the local flux space $\dFlu[1,1](K)$ is given by $\dFlu[1,1](K)=\tb_K\Grad(\dPot[1,1](K))$. From~\eqref{eq:d.pot.loc:sim.oo}, it is straightforward to see that
\begin{equation} \label{eq:d.flu.loc:sim.oo}
  \dFlu[1,1](K)=\tb_K\Gol^0(K)\overset{\tb_K^{-1}\perp}{\oplus}\Pol^0(K)(\vec{x}-\bar{\vec{x}}_K)
\end{equation}
(of dimension $d{+}1$), where ``$\tb_K^{-1}\perp$'' stands for $\tb_K^{-1}$-weighted $\vLL[K]$-orthogonality.
Because $\tb_K\Gol^0(K)=\vPol^0(K)$, we infer that $\dFlu[1,1](K)=\RT^1(K)$, where $\RT^1(K)$ is the local lowest-order Raviart--Thomas space (cf.~\eqref{eq:RTN}).
Observe that $\dFlu[1,1](K)$ satisfies $\vPol^0(K)\subset\dFlu[1,1](K)\subset\vPol^1(K)$.
At the global level,
\[\gFlu[1,1]=\RT^1(\KK)\cap\Hdiv,\]
that is $\gFlu[1,1]=\RT^1(\Omega)$, where $\RT^1(\Omega)$ is the global lowest-order Raviart--Thomas space, originally introduced (for arbitrary degrees) in~\cite{RaTho:77b} in combination with $\Pol^0(\KK)$ as a stable pair for the mixed Poisson problem in 2D (then extended to 3D by N\'ed\'elec in~\cite{Nedel:80}).

For such a choice of discrete spaces, the mixed formulation of Problem~\ref{pb:d.mixed} precisely coincides with the lowest-order Raviart--Thomas scheme from~\cite{RaTho:77b}. By discrete primal/mixed equivalence, the latter problem is equivalent to Problem~\ref{pb:d.primal}, that is: find $\til{u}_\KK\in\gPot[1,1]$ such that
\begin{equation} \label{eq:d.primal:sim.oo}
  (\tb\Grad_{\KK}\til{u}_{\KK},\Grad_{\KK}\til{v}_{\KK})_{\Omega}=(f,\projKK[0](\til{v}_{\KK}))_{\Omega}\qquad\forall\til{v}_{\KK}\in\gPot[1,1].
\end{equation}
The lowest-order (mixed) Raviart--Thomas method on simplices is hence equivalent to a bubble-enriched version of the (primal) Crouzeix--Raviart method, modulo a slight modification of its right-hand side.
Quite remarkably, the solution $\til{u}_\KK\in\gPot[1,1]\subset\tilHzone[1]$ to Problem~\eqref{eq:d.primal:sim.oo} also satisfies that $\tb\Grad_\KK\til{u}_\KK\in\RT^1(\Omega)\subset\Hdiv$.
This result first appeared in the literature under that form in~\cite[Sec.~4]{ChenZ:93} for triangular meshes, and was then generalized to tetrahedral meshes in~\cite[Sec.~10]{ArbCh:95}. However, both statements assume that all the simplices of the mesh are regular (equilateral, in 2D). It was indeed not realized, at the time, that the result is actually valid for any simplical mesh, which we make clear in this work. Whereas it is evident that $\tb_K\Grad\psi_K\in\RT^1(K)$, in~\cite{ChenZ:93,ArbCh:95}, the space $\Pol^1(K)$ was instead enriched with the span of $\psi_K^\circ$, and it was not noticed that $\tb_K\Grad\psi_K^\circ\in\RT^1(K)$ whatever the shape of the simplex $K$ (in fact, there holds $\psi_K^\circ=\alpha\psi_K+\beta$ for some $\alpha,\beta\in\R$, $\alpha\neq 0$).

\begin{remark}[The case of hyperrectangles]
  A similar equivalence result is valid on hyperrectangles (i.e., rectangles in 2D, rectangular cuboids in 3D) for diagonal (still cell-wise constant) mobility tensor fields.
  Let us examplify it in 2D.
  For any rectangle $K\in\KK$, letting
  \[\phi_K(\vec{x})\defi\frac{1}{2}\tb_K^{-1}\begin{pmatrix}(x-\bar{x}_K)\\-(y-\bar{y}_K)\end{pmatrix}{\cdot}\begin{pmatrix}(x-\bar{x}_K)\\(y-\bar{y}_K)\end{pmatrix},\]
  one can easily verify that $\dPot[0,1](K)=\Pol^1(K)\oplus\Pol^0(K)\phi_K$ (of dimension $4$). At the global level, there holds $\gPot[0,1]=\til{\mathcal{RT}}^1_0(\Omega)$, where $\til{\mathcal{RT}}^1(\Omega)$ (not to be confused with $\RT^1(\Omega)$) is the Rannacher--Turek space, introduced in~\cite{RaTur:92} (in its vector-valued fashion) in combination with $\Pol^0(\KK)$ as a stable pair for the Stokes problem. Turning to the case $\ell=1$, one can check that $\dPot[1,1](K)=\dPot[0,1](K)\oplus\Pol^0(K)\psi_K$ (of dimension $5$), where we have enriched the local space with the same function as in the simplicial setting. Also in that case, we have $\Pol^1(K)\subset\dPot[1,1](K)\subset\Pol^2(K)$. There holds
  \[\vPol^0(K)\subset\dFlu[1,1](K)=\tb_K\Grad\big(\dPot[1,1](K)\big)=\begin{pmatrix}\spa\{1,(x-\bar{x}_K)\}\\\spa\{1,(y-\bar{y}_K)\}\end{pmatrix}\subset\vPol^1(K)\]
  (of dimension $4$). The local (resp.~global) flux space $\dFlu[1,1](K)$ (resp.~$\gFlu[1,1]$) is nothing but the local (resp.~global) lowest-order Raviart--Thomas space on rectangles. We recover here a known result (cf.~\cite[Secs.~6 and 9]{ArbCh:95}): the lowest-order (mixed) Raviart--Thomas method on hyperrectangles is equivalent to a bubble-enriched version of the (primal) Rannacher--Turek method, modulo a slight modification of its right-hand side.
\end{remark}

\begin{remark}[Static condensation]
  We focus on Problem~\eqref{eq:d.primal:sim.oo} (similar considerations remain valid in the hyperrectangular case; see~\cite[Sec.~6]{ArbCh:95} in 2D).
  To any $\til{v}_\KK\in\gPot[1,1]$, one may associate the function $\til{v}^{\mathcal{CR}}_\KK\in\CR^1_0(\Omega)$ s.t.~$\projF[0](\til{v}^{\mathcal{CR}}_{\KK\mid F})=\projF[0](\til{v}_{\KK\mid F})$ for all $F\in\FF$.
  Based on the latter definition, and because $\til{v}^{\mathcal{CR}}_\KK\in\Pol^1(\KK)$, one can prove that, for all $K\in\KK$,
  \begin{align} 
    \Grad\til{v}^{\mathcal{CR}}_K&=\frac{1}{|K|}\sum_{F\in\FF_K}|F|\projF[0](\til{v}_{K\mid F})\vN_{K,F},\label{eq:CR.grad}\\
    \projK[0](\til{v}^{\mathcal{CR}}_K)&=\frac{1}{|K|}\sum_{F\in\FF_K}\frac{|F|\mathrm{d}_{K,F}}{d}\projF[0](\til{v}_{K\mid F}).\label{eq:CR.mean}
  \end{align}
  Locally to any $K\in\KK$, one can then compute $\tb_K\Grad\til{v}_K\in\dFlu[1,1](K)$ by combining the formula~\eqref{eq:ibp} with the $\tb_K^{-1}$-weighted $\vLL[K]$-orthogonal decomposition~\eqref{eq:d.flu.loc:sim.oo} of $\dFlu[1,1](K)$. Letting $\eta_K\defi\int_K|\tb_K^{-\nicefrac12}(\vec{x}-\bar{\vec{x}}_K)|^2$, a tedious yet straightforward calculation yields
  \begin{equation} \label{eq:RT.flux}
    \tb_K\Grad\til{v}_K=\tb_K\Grad\til{v}_K^{\mathcal{CR}}-\left[\frac{d|K|}{\eta_K}\big(\projK[0](\til{v}_K)-\projK[0](\til{v}^{\mathcal{CR}}_K)\big)\right](\vec{x}-\bar{\vec{x}}_K),
  \end{equation}
  where we also used the formulas~\eqref{eq:CR.grad} and~\eqref{eq:CR.mean}.
  Now, testing Problem~\eqref{eq:d.primal:sim.oo} with $\til{v}_\KK\in\gPot[1,1]$ such that $\projF[0](\til{v}_{\KK\mid F})=0$ for all $F\in\FF$ (so that $\til{v}_\KK^{\mathcal{CR}}\equiv 0$) and $\pi^0_{K'}(\til{v}_{K'})=\delta_{KK'}$ for all $K'\in\KK$, and leveraging the expression~\eqref{eq:RT.flux} for both the trial $\til{u}_\KK$ and test $\til{v}_\KK$ functions from~\eqref{eq:d.primal:sim.oo} (along with the $\tb_K^{-1}$-weighted $\vLL[K]$-orthogonality of the local decomposition~\eqref{eq:d.flu.loc:sim.oo}), we infer
  \begin{equation*}
    \frac{d^2|K|^2}{\eta_K}\left(\projK[0](\til{u}_K)-\projK[0](\til{u}^{\mathcal{CR}}_K)\right)=|K|\projK[0](f_{\mid K}),
  \end{equation*}
  which, in turn, yields
  \begin{equation} \label{eq:sc.mean}
    \projK[0](\til{u}_K)=\projK[0](\til{u}^{\mathcal{CR}}_K)+\frac{\eta_K}{d^2|K|}\projK[0](f_{\mid K}).
  \end{equation}
  Combining~\eqref{eq:RT.flux} (applied to $\til{u}_K$) and~\eqref{eq:sc.mean}, we thus obtain that
  \begin{equation} \label{eq:sc.flux}
    \tb_K\Grad\til{u}_K=\tb_K\Grad\til{u}_K^{\mathcal{CR}}-\frac{\projK[0](f_{\mid K})}{d}(\vec{x}-\bar{\vec{x}}_K).
  \end{equation}
  Besides, testing Problem~\eqref{eq:d.primal:sim.oo} with $\til{v}^{\mathcal{CR}}_{\KK}\in\CR^1_0(\Omega)\subset\gPot[1,1]$ and invoking, once again, the $\tb_K^{-1}$-weighted $\vLL[K]$-orthogonality of the local decomposition~\eqref{eq:d.flu.loc:sim.oo}, it is easily seen that $\til{u}^{\mathcal{CR}}_\KK\in\CR^1_0(\Omega)$ solves
  \begin{equation} \label{eq:sc.syst}
    (\tb\Grad_{\KK}\til{u}^{\mathcal{CR}}_{\KK},\Grad_{\KK}\til{v}^{\mathcal{CR}}_{\KK})_{\Omega}=(f,\projKK[0](\til{v}^{\mathcal{CR}}_{\KK}))_{\Omega}\qquad\forall\til{v}^{\mathcal{CR}}_{\KK}\in\CR^1_0(\Omega).
  \end{equation}
  Let us recap. We have thus been able to trade the initial saddle-point system corresponding to the (mixed) Raviart--Thomas problem for~\eqref{eq:sc.syst}, which is a nice SPD system with reduced size.
  The solution strategy then goes as follows: (i) one solves the condensed SPD system~\eqref{eq:sc.syst} for $\til{u}^{\mathcal{CR}}_\KK$ then, (ii) based on the formulas~\eqref{eq:sc.mean} and~\eqref{eq:sc.flux}, one locally reconstructs the solution $\til{u}_\KK$ to~\eqref{eq:d.primal:sim.oo}.
  It is easily seen that, for any $K\in\KK$,
  \[\til{u}_K=\til{u}_K^{\mathcal{CR}}-\frac{\projK[0](f_{\mid K})}{d}(\psi_K-c_K)\qquad\text{with}\qquad c_K=\frac{(2+d)\eta_K}{2d|K|}.\]
  The link between the lowest-order Raviart--Thomas and Crouzeix--Raviart methods, as well as the resulting solution strategy for mixed methods, has first been reported in 2D in~\cite{ArBre:85,Marin:85} for cubic (strong) bubble enrichment (see Example~\ref{ex:ArBre} for more details). It has been extended to quadratic (weak) bubble enrichment and to 3D in~\cite[Sec.~10]{ArbCh:95} (based on the preliminary work~\cite{ChenZ:93}); cf.~also~\cite{VohWo:13}.
\end{remark}

We have been investigating so far situations for which $\dFlu(\KK)=\tb\Grad_\KK\big(\dPot(\KK)\big)$, so that the constitutive relation from the problem at hand is exactly preserved. A necessary condition for a discrete flux space to be admissible in the above sense is that, for all $K\in\KK$, $\Curl\big(\tb_K^{-1}\dFlu(K)\big)=\{\vec{0}\}$ (in 2D, $\sRot\big(\tb_K^{-1}\dFlu(K)\big)=\{0\}$). Let us now assume that the flux space is cell-wise polynomial. For a cell-wise constant mobility field, the curl-free condition imposes that $\dFlu(K)=\tb_K\Grad\big(\mathcal{Q}^n(K)\big)$ for all $K\in\KK$, where $\mathcal{Q}^n(K)$ is some scalar-valued polynomial space of total degree $n\in\N^\star$. Whereas this structural condition is satisfied by the lowest-order Raviart--Thomas flux space on simplices or hyperrectangles (for diagonal mobility), we see that it is violated as soon as $\tb_K\kGol^1(K)\subseteq\dFlu(K)$, which is, in particular, verified as soon as $\vPol^1(K)\subseteq\dFlu(K)$. Therefore, for higher-order Raviart--Thomas spaces, or for Brezzi--Douglas--Marini spaces (of any order), for instance, this condition will necessarily be violated. We hence see that the lowest-order Raviart--Thomas space constitutes, in this respect, some kind of silver bullet. We recall, in passing, that Brezzi--Douglas--Marini spaces were introduced in~\cite{BrDoM:85} for the mixed discretization of the Poisson problem in 2D (then extended to 3D by N\'ed\'elec in~\cite{Nedel:86}). Now, a legitimate question is the following: what kind of primal equivalent rewriting may one obtain for mixed methods based on local (polynomial) flux spaces violating the condition $\Curl\big(\tb_K^{-1}\dFlu(K)\big)=\{\vec{0}\}$?
The answer to this question was first discussed in~\cite{ArbCh:95} (based on the seminal work~\cite{ArBre:85}), where the concept of projection finite element method was introduced.

\subsection{The general case}

To begin with, let us redefine the notion of (local) flux space, originally given in~\eqref{eq:d.flu.loc}. Let $\ell,k\in\N^\star$, and (a polytope) $K\in\KK$ be given. The (local) flux space $\dFlu(K)$ is henceforth defined as a polynomial subspace of
\[\vec{\Theta}^{\ell,k}(K)\defi\left\{\vec{\tau}\in\Hdiv[K]\mid\Div\vec{\tau}\in\Pol^{\ell-1}(K),\,\vec{\tau}_{\mid\pK}{\cdot}\vN_{\pK}\in\Pol^{k-1}(\FF_K)\right\}\]
satisfying that the map
\begin{equation} \label{eq:flux.map}
  \dFlu(K)\to\Gol^{\ell-2}(K)\times\Pol^{k-1}(\FF_K);\;\vec{\tau}\mapsto\big(\vprojGK(\vec{\tau}),\vec{\tau}_{\mid\pK}{\cdot}\vN_{\pK}\big)
\end{equation}
is surjective, with bounded right-inverse so that, for any couple $(\vec{\tau}_{\Gol,K},\tau_{\FF_K})\in\Gol^{\ell-2}(K)\times\Pol^{k-1}(\FF_K)$, there is $\vec{\tau}\in\dFlu(K)$ such that
\begin{equation} \label{eq:bnd.rinv}
  \|\vec{\tau}\|_K+h_K\|\Div\vec{\tau}\|_K\lesssim\|\vprojGK(\vec{\tau})\|_K+h_K^{\frac12}\|\vec{\tau}_{\mid\pK}{\cdot}\vN_{\pK}\|_{\pK}.  
\end{equation}
Note that, as opposed to definition~\eqref{eq:d.flu.loc}, elements in $\dFlu(K)$ are not anymore assumed to be curl-free.
In practice, for consistency reasons, it is also assumed that $\tb_K\Gol^{k-1}(K)\subseteq\dFlu(K)$. 
Let us denote by $\wprojFluK$ (resp.~$\wprojFluKK$) the $\tb_K^{-1}$-weighted $\vLL[K]$-orthogonal (resp.~$\tb^{-1}$-weighted $\vLL$-orthogonal) projector onto $\dFlu(K)$ (resp.~$\dFlu(\KK)$). 
We also let $\dPotb(K)$ be some (non-necessarily polynomial) finite-dimensional functional space such that the (DoF) map
\begin{equation} \label{eq:pot.map}
  \dPotb(K)\to\Pol^{\ell-1}(K)\times\Pol^{k-1}(\FF_K);\;v\mapsto\big(\projK(v),\projFFK(v_{\mid\pK})\big)
\end{equation}
is bijective (a valid choice is $\dPotb(K)=\dPot(K)$). In practice, again for consistency reasons, it is also assumed that $\Pol^k(K)\subseteq\dPotb(K)$. Importantly, $\tb_K\Grad\big(\dPotb(K)\big)$ is not assumed to be a subspace of the flux space $\dFlu(K)$.

Let us make a crucial observation: quite remarkably, the key formula~\eqref{eq:ibp} remains valid for all $v\in\dPotb(K)$ and $\vec{\tau}\in\dFlu(K)$ (with the new definition), yielding
\begin{equation} \label{eq:ibp.proj}
  \big(\tb_K^{-1}\wprojFluK(\tb_K\Grad v),\vec{\tau}\big)_K=-\big(\Div\vec{\tau},\projK(v)\big)_K+\big(\vec{\tau}_{\mid\pK}{\cdot}\vN_{\pK},\projFFK(v_{\mid\pK})\big)_{\pK}.
\end{equation}
In turn, the arguments from the proofs of Lemmas~\ref{le:d.ptom} and~\ref{le:d.mtop} trivially extend. One can thus infer the equivalence of the mixed method based on $\gFlu\defi\dFlu(\KK)\cap\Hdiv$ (with the new definition) and $\Pol^{\ell-1}(\KK)$ with the following primal problem: find $\til{x}_\KK\in\gPotb$ s.t.
\begin{equation} \label{eq:d.primal.proj}
  \big(\tb^{-1}\wprojFluKK\big(\tb\Grad_{\KK}\til{x}_{\KK}\big),\wprojFluKK\big(\tb\Grad_{\KK}\til{v}_{\KK}\big)\big)_{\Omega}=(f,\projKK(\til{v}_{\KK}))_{\Omega}\qquad\forall\til{v}_{\KK}\in\gPotb,
\end{equation}
where we have set $\gPotb\defi\dPotb(\KK)\cap\tilHzone$.
Phrased otherwise, $(\vec{\sigma},u)\in\gFlu\times\Pol^{\ell-1}(\KK)$ solves Problem~\ref{pb:d.mixed} if and only if
\[\vec{\sigma}=\wprojFluKK\big(\tb\Grad_{\KK}\til{x}_{\KK}\big)\qquad\text{and}\qquad u=\projKK(\til{x}_\KK),\]
where $\til{x}_\KK\in\gPotb$ is the unique solution to~\eqref{eq:d.primal.proj}.
Remark that, since $\dFlu(\KK)$ is assumed cell-wise polynomial, the projection $\wprojFluKK\big(\tb\Grad_{\KK}\til{x}_{\KK}\big)$ is always computable from~\eqref{eq:ibp.proj}. On the contrary, the function $\til{x}_\KK$ is only computable if $\dPotb(\KK)$ is cell-wise polynomial (or analytically known). In the opposite case, solely its DoFs (cf.~\eqref{eq:pot.map}) are accessible.
Primal methods of the form~\eqref{eq:d.primal.proj} are referred to as projection finite element methods in~\cite{ArbCh:95}. In the latter reference, it is implicitly assumed that the companion space $\dPotb(\KK)$ is computable. Here, we do not make such an assumption, i.e., $\dPotb(\KK)$ may remain purely virtual. Therefore, we will simply refer to these methods as projection methods (without reference to finite elements). Note that, even when the space $\dPotb(\KK)$ is virtual, it is possible to post-process, based on the DoFs, an optimally consistent (albeit fully discontinuous) potential reconstruction.
\begin{remark}[Well-posedness] \label{re:proj.wp}
  The (uniform) well-posedness of the mixed method based on $\gFlu$ (with the new definition) and $\Pol^{\ell-1}(\KK)$ can be established as in Remark~\ref{re:fdc.wp}, noticing that the surjectivity of the flux map~\eqref{eq:flux.map} is sufficient to prove the result, upon replacing the stability estimate~\eqref{eq:bnd.hdiv} by~\eqref{eq:bnd.rinv}.
  The (uniform) well-posedness of Problem~\eqref{eq:d.primal.proj}, in turn, is less obvious. As expected, it is, as well, a consequence of the surjectivity of the flux map~\eqref{eq:flux.map}. More precisely, combining the formula~\eqref{eq:div} with~\eqref{eq:ibp.proj}, for all $v\in\dPotb(K)$ and $\vec{\tau}\in\dFlu(K)$, there holds
  \begin{multline} \label{eq:ibp.proj.bis}
    \big(\tb_K^{-1}\wprojFluK(\tb_K\Grad v),\vec{\tau}\big)_K=\big(\vprojGK(\vec{\tau}),\Grad\projK(v)\big)_K\\+\big(\vec{\tau}_{\mid\pK}{\cdot}\vN_{\pK},\projFFK\big[\projFFK(v_{\mid\pK})-\projK(v)_{\mid\pK}\big]\big)_{\pK}.
  \end{multline}
  By surjectivity of the flux map~\eqref{eq:flux.map}, it is then possible to find $\vec{\tau}\in\dFlu(K)$ such that (i) $\vprojGK(\vec{\tau})=\Grad\projK(v)$ and (ii) $\vec{\tau}_{\mid\pK}{\cdot}\vN_{\pK}=h_K^{-1}\projFFK\big[\projFFK(v_{\mid\pK})-\projK(v)_{\mid\pK}\big]$, and satisfying the stability estimate~\eqref{eq:bnd.rinv}. We thus infer that, for all $K\in\KK$,
  \[\big\|\tb_K^{-\frac12}\wprojFluK(\tb_K\Grad v)\big\|_K\gtrsim\big\|\Grad\projK(v)\big\|_K+h_K^{-\frac12}\big\|\projFFK\big[\projFFK(v_{\mid\pK})-\projK(v)_{\mid\pK}\big]\big\|_{\pK},\]
  which implies that the map $\til{v}_{\KK}\mapsto\big\|\tb^{-\frac12}\wprojFluKK(\tb\Grad_{\KK}\til{v}_\KK)\big\|_{\Omega}$ is a norm on $\gPotb$, uniformly equivalent to $\|\tb^{\frac12}\Grad_{\KK}\til{v}_{\KK}\|_{\Omega}$.
\end{remark}
\noindent
The surjectivity of the flux map~\eqref{eq:flux.map} hence provides a simple criterion (to be checked in practice) for primal/mixed well-posedness and equivalence to hold true. This criterion is valid on general polytopal cells. It is essentially a reformulation (in simpler terms) of the criterion of~\cite[Thm.~4]{ArbCh:95} (note that the convexity assumption therein may be relaxed).

\begin{example}[Lowest-order Raviart--Thomas, revisited] \label{ex:ArBre}
  We have already seen above that, for any simplex $K\in\KK$, $\RT^1(K)=\tb_K\Grad(\dPot[1,1](K))$, where the space $\dPot[1,1](K)$ consists of the space $\Pol^1(K)$ enriched with a quadratic (weak) bubble. This construction leads to the equivalent primal formulation~\eqref{eq:d.primal:sim.oo}. In the seminal work~\cite{ArBre:85,Marin:85}, restricted to 2D, a different construction was actually advocated, leading to a space $\dPotb[1,1](K)$ with the same unisolvent set of degrees of freedom as $\dPot[1,1](K)$, but departing from $\dPot[1,1](K)$ (in particular, $\tb_K\Grad(\dPotb[1,1](K))\not\subset\RT^1(K)$). In those references, the space $\dPotb[1,1](K)$ is constructed by enriching $\Pol^1(K)$ with a cubic(/quartic in 3D) (strong) bubble.
  The resulting equivalent primal formulation~\eqref{eq:d.primal.proj} then writes: find $\til{x}_\KK\in\gPotb[1,1]$ such that
  \begin{equation} \label{eq:d.primal.proj:sim.oo}
    \big(\tb^{-1}\wprojFluxKK[1]{\RT}\big(\tb\Grad_{\KK}\til{x}_{\KK}\big),\wprojFluxKK[1]{\RT}\big(\tb\Grad_{\KK}\til{v}_{\KK}\big)\big)_{\Omega}=(f,\projKK[0](\til{v}_{\KK}))_{\Omega}\qquad\forall\til{v}_{\KK}\in\gPotb[1,1].
  \end{equation}
  We thus have that $(\vec{\sigma},u)\in\RT^1(\Omega)\times\Pol^0(\KK)$ solves Problem~\ref{pb:d.mixed} if and only if
  \[\vec{\sigma}=\wprojFluxKK[1]{\RT}\big(\tb\Grad_{\KK}\til{x}_{\KK}\big)=\tb\Grad_\KK\til{u}_\KK\qquad\text{and}\qquad u=\projKK[0](\til{x}_\KK)=\projKK[0](\til{u}_\KK),\]
  with $\til{x}_\KK\in\gPotb[1,1]$ and $\til{u}_\KK\in\gPot[1,1]$ unique respective solutions to~\eqref{eq:d.primal.proj:sim.oo} and~\eqref{eq:d.primal:sim.oo}.
\end{example}
\begin{example}[Arbitrary-order RT and BDM]
  Let a simplex $K\in\KK$ be given, and recall the definitions~\eqref{eq:RTN} of the local Raviart--Thomas and (first-kind) N\'ed\'elec spaces~\cite{RaTho:77b,Nedel:80}. Letting, for $n\in\N^\star$, $\BDM^n(K)\defi\vPol^n(K)$ denote the local Brezzi--Douglas--Marini space~\cite{BrDoM:85,Nedel:86}, it is known that the two (DoF) maps
  \begin{itemize}
    \item[$\bullet$] $\BDM^n(K)\to\Ne^{n-1}(K)\times\Pol^n(\FF_K);\;\vec{\tau}\mapsto\big(\projFluxK[n-1]{\Ne}(\vec{\tau}),\vec{\tau}_{\mid\pK}{\cdot}\vN_{\pK}\big)$,
    \item[$\bullet$] $\RT^n(K)\to\vPol^{n-2}(K)\times\Pol^{n-1}(\FF_K);\;\vec{\tau}\mapsto\big(\vprojK[n-2](\vec{\tau}),\vec{\tau}_{\mid\pK}{\cdot}\vN_{\pK}\big)$,
  \end{itemize}
  are bijective (note that $\Ne^0(K)=\{\vec{0}\}$). Fix now an integer $k\geq 2$, and set $\dFlu[k-1,k](K)\defi\BDM^{k-1}(K)$ and $\dFlu[k,k](K)\defi\RT^k(K)$. Then, one can easily verify that the space $\dFlu(K)$ for $\ell\in\{k{-}1,k\}$ is a (polynomial) subspace of $\vec{\Theta}^{\ell,k}(K)$ satisfying that the flux map~\eqref{eq:flux.map} is surjective (with bounded right-inverse so that~\eqref{eq:bnd.rinv} holds).
  Remark, indeed, that $\Gol^{k-3}(K)\subseteq\Ne^{k-2}(K)$ and $\Gol^{k-2}(K)\subseteq\vPol^{k-2}(K)$.
  One can thus prove equivalence, for $\ell\in\{k{-}1,k\}$, between the mixed (BDM or RT) method based on $\gFlu\defi\dFlu(\KK)\cap\Hdiv$ and $\Pol^{\ell-1}(\KK)$ and the primal (projection) method~\eqref{eq:d.primal.proj} based on $\gPotb$ (for any choice of $\dPotb(K)$ such that the potential map~\eqref{eq:pot.map} is bijective). In~\cite[Sec.~4]{ArbCh:95}, building on ideas from~\cite{ArBre:85}, polynomial companion spaces $\dPotb(K)$ are constructed in 2D for both BDM and RT (note, however, that for $k$ even, the latter spaces do not contain $\Pol^k(K)$). An alternative path is to consider a non-computable companion space, for instance $\dPot(K)$. In such a scenario, the companion space is purely virtual. This is the approach pursued (without even introducing the notion of companion space) by self-stabilized (also termed unstabilized or stabilization-free) hybrid high-order (or weak Galerkin) methods (still on simplices). We refer the reader to~\cite{AbErP:18} (cf.~also~\cite[Sec.~3.1]{CiEPi:21}) for RT and to~\cite{CaTra:21} (cf.~also~\cite{CDPLe:22}) for (RT and) BDM. We make completely clear in this work that such approximations are actually equivalent to the RT and BDM (mixed) finite element schemes. 
\end{example}
\begin{remark}[Polytopal cells]
  First remark that, for our theory above to be valid (in particular, primal/mixed equivalence), one does not need to assume that the local flux space, subset of $\vec{\Theta}^{\ell,k}(K)$, be polynomial, as long as it remains computable (for instance, piecewise polynomial is fine). 
  Self-stabilized hybrid high-order (or weak Galerkin) methods have also been studied on general polytopal cells. A common approach is to consider a partition $\mathcal{S}_K$ of the cell $K$ into sub-simplices. In~\cite{DPDMa:18}, for $\ell\in\{k{-}1,k,k{+}1\}$, a stable gradient reconstruction is obtained in each cell $K$ as the sum of a consistent contribution in $\vPol^{k-1}(K)$ (or $\Gol^{k-1}(K)$) and of an orthogonal stabilization in the broken space $\RT^{k+1}(\mathcal{S}_K)$ (or $\RT^{\max(\ell,k)}(\mathcal{S}_K)$). However, such a reconstruction does not belong to $\Hdiv[K]$ (and thus to $\vec{\Theta}^{\ell,k}(K)$) in general, hence it does not enter the present framework. In contrast, in~\cite{YeZha:21a,YeZha:21b}, a gradient reconstruction in $\RT^k(\mathcal{S}_K)\cap\vec{\Theta}^{\ell,k}(K)$ (resp.~$\vPol^{k-1}(\mathcal{S}_K)\cap\vec{\Theta}^{\ell,k}(K)$) is proposed for $\ell=k$ (resp.~$\ell=k{-}1$). If the surjectivity of the flux map~\eqref{eq:flux.map} is established therein, yielding the unique solvability of the corresponding projection methods, no proof of~\eqref{eq:bnd.rinv} is however provided. It is thus not clear whether the proposed constructions yield uniformly stable methods. A similar reconstruction strategy has also been studied in~\cite{CaQDP:24}, albeit in the Navier--Stokes context (the vectorial variable to be reconstructed thus belongs to $\vHone$). Subtessellation-avoiding approaches are also available. However, none of them enters the present framework. In~\cite{YeZha:20}, a gradient reconstruction in $\vPol^{\car(\FF_K)+k-2}(K)(\not\subset\vec{\Theta}^{\ell,k}(K))$ is studied for $\ell=k$, which is however suboptimally consistent. In~\cite{BoCCM:25}, in turn, an alternative approach is proposed for $\ell\in\{k{-}1,k,k{+}1\}$, with gradient reconstruction in $\Grad\big(\mathcal{Q}^n(K)\big)(\not\subset\vec{\Theta}^{\ell,k}(K))$, where $\mathcal{Q}^n(K)\defi\{v\in\Pol^n(K)\mid\triangle p\in\Pol^{\ell-1}(K)\}$ for $n>\ell+1$. The reconstructed gradient is optimally consistent, and stability is observed numerically (but not proven) for $n$ such that $\dim\big(\mathcal{Q}^n(K)\big)\geq\dim\big(\Pol^{\ell-1}(K)\big)+\dim\big(\Pol^{k-1}(\FF_K)\big)$. Primal/mixed equivalence for the latter approaches (as well as for~\cite{DPDMa:18}) will be investigated in Part II of this work. In that case, some kind of equivalence shall be recovered, but at the price of $\Hdiv$-conformity.
\end{remark}

\section{Conclusion}

In this article, we have studied primal/mixed equivalence on general polytopal partitions, building additional bridges between (oversampling-free) multiscale methods, and shedding a new light on projection methods, with implications on self-stabilized hybrid methods. Our study was performed under the fundamental assumption that the global flux space is conforming in $\Hdiv$ which, provided the latter space is computable, guarantees that the corresponding mixed formulation is a (mixed) finite element method.
This crucial assumption will be relaxed in Part II of this work, dedicated to (general) polytopal element methods.

\section*{Acknowledgments}

The work of the author is supported by the Agence Nationale de la Recherche (ANR) under the PRCE grant HIPOTHEC (ANR-23-CE46-0013).
The author also acknowledges the support of the CDP C2EMPI, together with the French State under the France-2030 programme, the University of Lille, the Initiative of Excellence of the University of Lille, the European Metropolis of Lille for their funding and support of the R-CDP-24-004-C2EMPI project.

\ifSINUM

\bibliographystyle{siamplain}
\bibliography{equiv}

\else

\bibliographystyle{plain}
{\small
\bibliography{equiv}
}

\fi

\end{document}